\newcommand{\R}{\varmathbb{R}}
\newcommand{\Z}{\varmathbb{Z}}
\newcommand{\Rn}{{\varmathbb{R}^n}}
\newcommand{\ve}{\varepsilon}
\newcommand{\vint}{\operatornamewithlimits{\boldsymbol{--} \!\!\!\!\!\! \int }}
\def\diam{\qopname\relax o{diam}}
\def\loc{\qopname\relax o{loc}}
\def\dist{\qopname\relax o{dist}}
\def\min{\qopname\relax o{min}}
\def\diam{\qopname\relax o{diam}}
\theoremstyle{plain}
\newtheorem{theorem}[equation]{Theorem}
\newtheorem{lemma}[equation]{Lemma}
\newtheorem{corollary}[equation]{Corollary}
\theoremstyle{definition}
\newtheorem{definition}[equation]{Definition}
\newtheorem{example}[equation]{Example}
\theoremstyle{remark}
\newtheorem{remark}[equation]{Remark}
\numberwithin{equation}{section}
\title{Pointwise estimates to the modified Riesz potential }
\author{Petteri Harjulehto}
\address[Petteri Harjulehto]{Department of Mathematics and Statistics,
FI-20014 University of Turku, Finland}
\email{petteri.harjulehto@utu.fi}
\author{Ritva Hurri-Syrj\"anen}
\address[Ritva Hurri-Syrj\"anen]{Department of Mathematics and Statistics,
FI-00014 University of Helsinki, Finland}
\email{ritva.hurri-syrjanen@helsinki.fi}
\date{\today}
\begin{document}

\keywords{Riesz potential, Hardy--Littlewood maximal operator, pointwise estimate, Orlicz space,  Sobolev--Poincar\'e inequality, non-smooth domain}
\subjclass[2010]{31C15 , 42B20, 42B25, 26D10, 46E30}

\begin{abstract} 
We prove pointwise estimates to the modified Riesz potential. We show the boundedness of its Luxemburg norm.
As an application we obtain Orlicz embedding results for $L^1_p$-functions, $1\le p<n$.  We study the sharpness of the results. 
\end{abstract}

\maketitle

\markboth{\textsc{Petteri Harjulehto and Ritva Hurri-Syrj\"anen }}
{\textsc{Petteri Harjulehto and Ritva Hurri-Syrj\"anen }}

\section{Introduction}

We study pointwise estimates and integral estimates for the modified Riesz potential
\begin{equation}\label{potential}
\int_{G}  \frac{|f(y)|}{\varphi(  |x-y|)^{n-1}} \, dy
\end{equation}
where $\varphi$ is a continuous, strictly increasing
function on $[0,\infty )$
such that for some constant $C_\varphi$
\begin{equation}\label{varphi_control}
\frac{\varphi (t_1)}{t_1}\le C_\varphi
\frac{\varphi (t_2)}{t_2}
\quad \text{whenever}\quad  0<t_1\le t_2\,,
\end{equation}
and $f$ is a locally integrable function defined on an open set $G$ in the Euclidean  $n$-space  $\Rn$, $n\geq 2$.
We give sufficient conditions to an Orlicz function $H$  such that the inequality
\begin{equation*}
H\left ( \int_{G}  \frac{|f(y)|}{\varphi(  |x-y|)^{n-1}} \, dy\right )
\le C (M f (x))^p
\end{equation*}
holds for every $x\in \Rn$, whenever
$\| f \|_{L^p (\Rn)} \le 1$, and the constant
$C$ is independent of $f$.
Here, $Mf$ is
the Hardy-Littlewood maximal function for balls,
\[
Mf(x) = \sup_{r>0} \vint_{B(x,r)} |f(y)| \, dy\,.
\]
This inequality implies the boundedness of the corresponding Luxemburg norm.
As an application we obtain Orlicz embedding results
for $L^1_p$-functions which are defined on domains with fractal boundaries, whenever $1\le p< n$.

Our main theorems are 
Theorem \ref{thm:pointwise-maximal},
the pointwise estimate
with the classical Hardy-Littlewood maximal function, and
Theorem \ref{thm:Riesz-is-bounded},
the boundedness of the Luxemburg norm of the modified Riesz potential.

\begin{theorem}\label{thm:pointwise-maximal}
Let $1\le p<n$ be given.
Let $\varphi : [0,\infty )\to [0, \infty)$ be a continuous, strictly increasing function which satisfies
condition  \eqref{varphi_control}.
Suppose that
there exists a continuous function $h:[0,\infty )\to [0, \infty)$ so that 
\begin{equation}\label{h_sum}
\sum_{k=1}^{\infty} \frac{(2^{-k}t )^n}{\varphi (t 2^{-k})^{n-1}}
\le h(t ) \quad \text{ for all }\quad t>0\,.
\end{equation}
Let $\delta :(0,\infty )\to [0,\infty )$ be a continuous function and
let $H: [0,\infty )\to [0, \infty)$ be an Orlicz function satisfying the $\Delta _2$-condition. Suppose that
there exists a finite constant $C_H$ such that the inequality 
\begin{equation}\label{sum}
H\left(h(\delta (t)) t + 
\varphi (\delta (t) )^{1-n}(\delta(t)) ^{n(1-\frac{1}{p})} \right)\le
C_H t^p 
\end{equation}
holds for all $t>0$.
Let $G$ in $\Rn$ be an open set.
If $\| f \|_{L^p (\Rn)} \le 1$, then 
there exists  a constant $C$ such that 
the inequality
\begin{equation}\label{main_riesz_inequality}
H\left ( \int_{G}  \frac{|f(y)|}{\varphi(  |x-y|)^{n-1}} \, dy\right )
\le C (M f (x))^p
\end{equation}
holds for every $x\in \Rn$. Here the constant $C$ depends on $n$, $p$, $C_\varphi$, $C_H$, and the $\Delta_2$-constant of $H$ only.
\end{theorem}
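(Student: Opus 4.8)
The plan is to split the Riesz-type integral over $G$ at the scale $\delta=\delta(Mf(x))$, writing
\[
\int_G \frac{|f(y)|}{\varphi(|x-y|)^{n-1}}\,dy
= \int_{G\cap B(x,\delta)} \frac{|f(y)|}{\varphi(|x-y|)^{n-1}}\,dy
+ \int_{G\setminus B(x,\delta)} \frac{|f(y)|}{\varphi(|x-y|)^{n-1}}\,dy
=: I_1 + I_2,
\]
and to estimate $I_1$ by a constant times $h(\delta)\,Mf(x)$ and $I_2$ by a constant times $\varphi(\delta)^{1-n}\delta^{n(1-1/p)}$, so that after applying $H$ and using its convexity/$\Delta_2$-property together with hypothesis \eqref{sum} (with $t=Mf(x)$) we arrive at \eqref{main_riesz_inequality}. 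If $Mf(x)=0$ then $f=0$ a.e.\ and the claim is trivial, so we may assume $Mf(x)>0$ and hence $\delta(Mf(x))$ is a well-defined nonnegative number; the degenerate cases $\delta=0$ or $\delta=\infty$ are handled by a limiting argument or fall under one of the two bounds directly.

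For $I_1$, the standard dyadic annulus decomposition $B(x,\delta)=\bigcup_{k\ge 1} \big(B(x,2^{-k+1}\delta)\setminus B(x,2^{-k}\delta)\big)$ gives, using that $\varphi$ is increasing,
\[
I_1 \le \sum_{k=1}^{\infty} \varphi(2^{-k}\delta)^{1-n} \int_{B(x,2^{-k+1}\delta)} |f(y)|\,dy
\le c_n \sum_{k=1}^{\infty} \frac{(2^{-k}\delta)^n}{\varphi(2^{-k}\delta)^{n-1}}\, Mf(x)
\le c_n\, h(\delta)\, Mf(x),
\]
where the middle step bounds the integral over the ball of radius $2^{-k+1}\delta$ by $c_n (2^{-k}\delta)^n Mf(x)$ and the last step is exactly \eqref{h_sum} with $t=\delta$. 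For $I_2$, I would use Hölder's inequality with exponents $p$ and $p'=p/(p-1)$ (and in the case $p=1$, simply bound $|f|$ in $L^1$ against $1$ via $\|f\|_{L^p}\le 1$ together with the sup-bound, or treat $p=1$ separately since then the $p'$-integral is replaced by an $L^\infty$ estimate of the kernel):
\[
I_2 \le \|f\|_{L^p(\Rn)} \left( \int_{\Rn\setminus B(x,\delta)} \varphi(|x-y|)^{(1-n)p'}\,dy \right)^{1/p'}
\le \left( \int_{\Rn\setminus B(x,\delta)} \varphi(|x-y|)^{(1-n)p'}\,dy \right)^{1/p'}.
\]
The remaining tail integral is where condition \eqref{varphi_control} does the work: splitting $\Rn\setminus B(x,\delta)$ into dyadic annuli $B(x,2^{j+1}\delta)\setminus B(x,2^j\delta)$, $j\ge 0$, each contributes $\asymp (2^j\delta)^n \varphi(2^j\delta)^{(1-n)p'}$, and \eqref{varphi_control} forces $\varphi(t)\ge c\, t\,\varphi(\delta)/\delta$ for $t\ge\delta$, making the geometric series converge (this uses $p<n$, equivalently $(1-n)p'+n<0$) and summing to a constant times $\delta^n\varphi(\delta)^{(1-n)p'}$; taking the $1/p'$ power yields $I_2 \le c\,\varphi(\delta)^{1-n}\delta^{n/p'} = c\,\varphi(\delta)^{1-n}\delta^{n(1-1/p)}$, as desired.

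Combining, $\int_G \cdots \le c_n h(\delta) Mf(x) + c\, \varphi(\delta)^{1-n}\delta^{n(1-1/p)}$ with $\delta=\delta(Mf(x))$; applying the increasing function $H$, pulling the constant $\max\{c_n,c\}$ outside via the $\Delta_2$-condition (at the cost of a factor depending only on the $\Delta_2$-constant and on how many doublings are needed to absorb $\max\{c_n,c\}$), and finally invoking \eqref{sum} with $t=Mf(x)$ gives $H(\int_G\cdots)\le C\,(Mf(x))^p$ with $C$ depending only on $n$, $p$, $C_\varphi$, $C_H$, and the $\Delta_2$-constant of $H$. The main obstacle I anticipate is the tail estimate for $I_2$: one must use \eqref{varphi_control} carefully to get the correct lower bound on $\varphi$ away from the origin and to check that the exponent $(1-n)p'+n$ is genuinely negative (which is where $p<n$ enters), and one must also be attentive to the borderline case $p=1$, where $p'=\infty$ and the Hölder step must be replaced by an $L^1$–$L^\infty$ pairing with $\sup_{|x-y|\ge\delta}\varphi(|x-y|)^{1-n}=\varphi(\delta)^{1-n}$, which still yields $I_2\le \varphi(\delta)^{1-n} = \varphi(\delta)^{1-n}\delta^{n(1-1/p)}$ since $n(1-1/p)=0$ when $p=1$. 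A secondary technical point is ensuring the decomposition at scale $\delta$ makes sense when $\delta(Mf(x))=0$ or $=\infty$; in the first case only $I_2$ survives (and the bound reads $H(\int_G\cdots)\le H(c\cdot 0)$... ) and in the second only $I_1$, but in fact \eqref{sum} implicitly rules out pathological $\delta$ by forcing the left-hand side to be finite, so these cases are consistent with the stated conclusion.
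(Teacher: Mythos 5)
Your proposal is correct and follows the paper's own strategy (Hedberg's method): split at the scale $\delta(Mf(x))$, bound the near part by $h(\delta)Mf(x)$ via dyadic annuli and \eqref{h_sum}, bound the far part by $\varphi(\delta)^{1-n}\delta^{n(1-1/p)}$, and then apply $H$, the $\Delta_2$-condition, and \eqref{sum} with $t=Mf(x)$. The only substantive difference is in the tail estimate (the paper's Lemma 3.3): after H\"older you compute $\bigl\|\chi_{\Rn\setminus B(x,\delta)}\varphi(|x-\cdot|)^{1-n}\bigr\|_{L^{p'}}$ directly, using \eqref{varphi_control} to get the pointwise lower bound $\varphi(t)\ge C_\varphi^{-1}t\,\varphi(\delta)/\delta$ for $t\ge\delta$ and then integrating $r^{(1-n)p'+n-1}$ over $(\delta,\infty)$, which converges precisely because $p<n$. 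The paper instead dominates $\varphi(|x-y|)^{-n}$ by a constant times $(\delta/\varphi(\delta))^n\,M\bigl(\chi_{B(x,\delta)}|B(x,\delta)|^{-1}\bigr)(y)$ and invokes the boundedness of the Hardy--Littlewood maximal operator on $L^{(n-1)p'/n}(\Rn)$. Your radial computation is more elementary and gives the same bound with the same dependence of the constant on $n$, $p$, $C_\varphi$; the paper's maximal-function route is the one that generalizes more readily when the kernel is not radial or when one wants weighted versions. Your handling of $p=1$ by the $L^1$--$L^\infty$ pairing and your remark that \eqref{sum} forces the degenerate values of $\delta$ to be harmless are both consistent with (indeed slightly more careful than) the paper, which passes over these points silently.
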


Theorem \ref{thm:pointwise-maximal} implies the boundedness of the modified Riesz potential.

\begin{theorem}\label{thm:Riesz-is-bounded} 
Let $H$ 
be an Orlicz function and $\varphi$ be an increasing function as in Theorem \ref{thm:pointwise-maximal}.
Let $G$ be an open set in $\Rn$.
Then there exists a constant $C$ such that 
the inequality
\[
\int_G H \left ( \int_{G}  \frac{|f(y)|}{\varphi(  |x-y|)^{n-1}} \, dy\right ) \, dx \le C
\]
holds for every  $f$ when  $\|f\|_{L\log L (G)}\le 1$ if $p=1$ in \eqref{sum},  and for every
$f$  when $\|f\|_{L^p(G)}\le 1$ if  $1<p<n$ in 
\eqref{sum}. Here the constant $C$ depends on $n$, $p$, $C_\varphi$, $C_H$, and the $\Delta_2$-constant of $H$ only.
\end{theorem}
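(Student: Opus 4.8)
The plan is to integrate the pointwise estimate of Theorem \ref{thm:pointwise-maximal} over $G$ and then to invoke the mapping properties of the Hardy--Littlewood maximal operator, which behave differently in the two ranges $p=1$ and $1<p<n$.

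First I would extend $f$ by zero to all of $\Rn$, which changes neither the potential $\int_G|f(y)|\varphi(|x-y|)^{1-n}\,dy$ nor the norms involved: indeed $\|f\|_{L^p(G)}=\|f\|_{L^p(\Rn)}$, and for $p=1$ one has in addition $\|f\|_{L^1(\Rn)}\le c\,\|f\|_{L\log L(G)}$ because the Young function defining the Luxemburg norm of $L\log L$ dominates a multiple of the identity, any such constant $c$ being harmless thanks to the $\Delta_2$-condition. Hence $\|f\|_{L^p(\Rn)}\le1$ (after the obvious rescaling when $p=1$), and since all the structural hypotheses on $\varphi$, $h$, $\delta$ and $H$ are exactly those of Theorem \ref{thm:pointwise-maximal}, that theorem applies and yields
\begin{equation*}
H\!\left(\int_G\frac{|f(y)|}{\varphi(|x-y|)^{n-1}}\,dy\right)\le C\,\bigl(Mf(x)\bigr)^p\qquad\text{for every }x\in\Rn .
\end{equation*}
Integrating this inequality over $G$ reduces the assertion to a single bound, $\int_G\bigl(Mf(x)\bigr)^p\,dx\le C$, with $C$ of the required form.

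For $1<p<n$ this is immediate from the Hardy--Littlewood maximal theorem: $\int_G(Mf)^p\,dx\le\int_{\Rn}(Mf)^p\,dx\le C_{n,p}\,\|f\|_{L^p(\Rn)}^p\le C_{n,p}$, so no restriction on $G$ is needed here. The endpoint $p=1$ is the only delicate point, since $M$ is unbounded on $L^1$: there I would instead combine the sharp weak-type inequality $|\{x:Mf(x)>\lambda\}|\le C_n\lambda^{-1}\int_{\{|f|>\lambda/2\}}|f(y)|\,dy$ with the layer-cake formula to obtain the Wiener--Stein $L\log L$-estimate
\begin{equation*}
\int_GMf(x)\,dx\le C_n\Bigl(|G|+\int_G|f(y)|\log(e+|f(y)|)\,dy\Bigr),
\end{equation*}
whose right-hand side is controlled once $\|f\|_{L\log L(G)}\le1$, which forces $\int_G|f|\log(e+|f|)\,dy\le1$. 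This is exactly the step that uses the finiteness of $|G|$, so in the case $p=1$ the statement is to be read for $G$ of finite measure, as in the intended applications to bounded domains with fractal boundaries. Accordingly the main obstacle is this $p=1$ endpoint: one cannot simply quote $L^1$-boundedness of $M$, but must pass through the $L\log L\to L^1$ mapping property, while keeping track both of the normalising constant (absorbed by $\Delta_2$) and of the dependence on $|G|$; the remaining ingredients --- the extension by zero, the verification of the hypotheses of Theorem \ref{thm:pointwise-maximal}, and the case $1<p<n$ --- are routine.
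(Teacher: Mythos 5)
Your proof follows exactly the paper's route: integrate the pointwise bound from Theorem \ref{thm:pointwise-maximal} over $G$ and then invoke the $L^p$-boundedness of $M$ for $1<p<n$ and the $L\log L\to L^1$ mapping property for $p=1$. Your treatment of the endpoint $p=1$ is, however, more careful than the paper's one-line remark, and in spelling out the weak-type/layer-cake argument you correctly identify a point the paper glosses over: the Wiener--Stein estimate $\int_G Mf\,dx\le C_n\bigl(|G|+\int_G|f|\log(e+|f|)\,dx\bigr)$ requires $|G|<\infty$, and the resulting constant does depend on $|G|$. Indeed, $M$ does \emph{not} map $L\log L(\Rn)$ into $L^1(\Rn)$ (take a bump function, for which $Mf(x)\gtrsim|x|^{-n}$), so the theorem as stated for an arbitrary open $G$, with a constant independent of $|G|$, is not literally accurate in the $p=1$ case; it is accurate for the bounded $\varphi$-John domains to which the paper subsequently applies it. Your reading of the statement and the added precision are both correct; otherwise the two proofs coincide.
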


The integral estimates imply the corresponding norm
estimates and then the boundedness of the Luxemburg norm of the Orlicz function follows, see Corollary
\ref{cor:Riesz-is-bounded}.

\begin{remark}
\begin{enumerate}
\item
Theorem \ref{thm:pointwise-maximal} reduces to the classical pointwise estimate
for the Riesz potential $I_\alpha f$,
\begin{equation*}
I_\alpha f(x)=\int_{\Rn}  \frac{|f(y)|}{|x-y|^{n-\alpha}} \, dy\,,
\end{equation*}
that is, there exists a constant $C$ such that
\begin{equation*}
|I_{\alpha}f(x)|^{np/(n-\alpha p)}\le 
C Mf(x)^p\left\| f\right\|^{\alpha p np/n(n-\alpha p)}_{L^p(\Rn )}\,,
\end{equation*}
when $\alpha\in(0, 1]$ and 
$1< p< n$, 
\cite[(3) in the proof of Theorem 1]{Hed72}.
Indeed,
if $f\in L^p(\Rn )$ is given and
we choose
$\varphi (t)=t^{\frac{n-\alpha}{n-1}}$, $h(t)=t^\alpha$, $\delta (t)=\frac{t^{-\frac{p}{n}}}{\|f \|_{L^p(\Rn)}}$,
and
$H(t)=\frac{n-\alpha p}{np}t^{np/(n- \alpha p)}$, then
the assumptions of Theorem \ref{thm:pointwise-maximal}
are valid.    
If $\alpha \in (1,n)$, then inequality \eqref{varphi_control} fails and
we can not use the method of our proof 
for Theorem \ref{thm:pointwise-maximal}.
\item
The classical $(np/(n-\alpha p),p)$-inequality
for the Riesz potential $I_\alpha f$, that is,
for $\alpha >0$, $1<p<\infty$, and $\alpha p<n$
there is a constant $C(n,p,\alpha )$ such that
\begin{equation*}
\left\|I_{\alpha}f\right\|
_{L^{np/(n-\alpha p)}(\Rn)}
\le C(n,p,\alpha )\left\|f\right\|_{L^{p}(\Rn )}
\end{equation*} 
whenever $f\in L^p(\Rn )$,
\cite[Theorem 1]{Hed72}, 
is a special case of  
Theorem \ref{thm:Riesz-is-bounded} with $\varphi(t) =t^{\frac{n-\alpha}{n-1}}$ and $H(t) =\frac{n-\alpha p}{np} t^{np/(n-\alpha p)}$ whenever $\alpha\in(0, 1]$ and $1<p<n$.
\item
Trudinger's inequality 
\cite[p. 479]{Tru},   \cite[Theorem 1]{Moser1971},    \cite[Theorem]{Strichartz1972}, and
\cite[Theorem 2]{Hed72} 
for functions with compact support follows from 
Theorem \ref{thm:Riesz-is-bounded} 
when $\varphi (t)=t$
as in 
\cite[p. 507]{Hed72}.
\end{enumerate}
\end{remark}

More generally
boundedness results to the Riesz operator $I_{ \alpha }$ from an Orlicz space to another Orlicz space are found in \cite {ONeil1965},  \cite {Torchinsky1976}, 
\cite {Kokilashvili-Krbec}, 
and 
 \cite{Cianchi}. Andrea Cianchi  characterized the Orlicz functions which give the corresponding norm inequalities,
 \cite[Theorem 2 (ii)]{Cianchi1999}. 
Cianchi and Bianca Stroffolini gave simplified proofs, 
\cite[Theorem 1 , Corollary 1]{Cianchi_Stroffolini}. For recent developments we refer to  \cite{Ohno_Shimomura}.

We are interested in the modified Ries potential \eqref{potential}
which is  the classical Riesz potential whenever
$\varphi (t)=t^{\frac{n-\alpha}{n-1}}$.
We prove a  norm estimate for
the Luxemburg norm of the modified Riesz potential under certain assumptions on $\varphi $
whenever functions are $ L^1_p$-functions, Corollary
\ref{cor:Riesz-is-bounded}.

As an application
we obtain Orlicz embedding results for $L^1_p$-functions,
which are defined on 
bounded domains with a cone condition
but also if the functions are 
defined on more irregular domains, see Section 
\ref{proof} and
Theorems \ref{thm:main_application}
and
\ref{thm:main_application_b} there.

The definitions which we need are recalled in Section
\ref{preliminaries}.
After we have collected some auxiliary results
in Section \ref{estimates}
we prove the pointwise result Theorem \ref{thm:pointwise-maximal}
and the boundedness results Theorem \ref{thm:Riesz-is-bounded} and Corollary \ref{cor:Riesz-is-bounded} .
The properties of domains with fractal boundaries
are studied in Section \ref{domains}.
We state and prove embedding results 
in Section \ref{proof}.
Examples of functions to which our results apply are given in Section \ref{corollary_and_examples}.
We give sharpness results in Section \ref{sec:example}.

\section{Preliminaries}
\label{preliminaries}

Throughout this paper we assume that
the function $H:[0, \infty) \to [0, \infty)$ has the properties
\begin{enumerate}
\item $H$ is continuous,
\item $H$ is strictly increasing,
\item $H$ is convex,
\item $\lim_{t \to 0^+}\frac{H(t)}{t} =0$ and $\lim_{t \to \infty}\frac{H(t)}{t} =\infty$,
\item $\frac{H(t)}{t} < \frac{H(s)}{s}$ for $0<t<s$.
\end{enumerate}
In other words, we suppose that $H$ is an $N$-function,
\cite[8.2]{AF}. We also assume that $H$ satisfies the $\Delta_2$-condition, that is, there exists a constant
$C^{\Delta_2}_H$ such that
\begin{equation}\label{H_doubling}
H (2t)\le C^{\Delta_2}_H H(t) \quad \text{for all} \quad t>0.
\end{equation}
If the function satisfies the $\Delta_2$-condition, we say that the function is a $\Delta_2$-function.

Let $G$ in $\Rn$ be an open set.
The Orlicz class is a set of all measurable functions
$u$
defined on $G$
such that
\[
\int_G H \Big(|u(x)| \Big) \, dx < \infty\,.
\]
We study the Orlicz space $L^H (G)$ which means 
a space of all measurable functions
$u$ defined on $G$ such that 
\[
\int_G H \Big(\lambda |u(x)| \Big) \, dx < \infty
\]
for some  $\lambda >0$.

When the function $H$
satisfies the $\Delta_2$-condition, then the space $L^H (G)$ is a vector space and it is 
equivalent to the corresponding Orlicz class.
We study these Orlicz spaces and call their functions Orlicz functions.
The Orlicz space
$L^H (G)$ equipped with the Luxemburg norm
\[
\|u\|_{L^\Phi(G)} = \inf \left\{\lambda >0: \int_G \Phi\left ( \frac{|u(x)|}{\lambda}\right)\, dx\le 1 \right\}
\]
is a Banach space.

Let  $X$ and $Y$ be normed  spaces. 
Then,
$X$ is continuously embedded in $Y$,
that is, there exists a continuous embedding from $X$ into $Y$, 
written as
$X \hookrightarrow Y$, if
there exists a constant $C$ such that
\[
\|u\|_Y\le C\|u\|_X
\]
for all $u\in X$.

We note that if the Lebesque measure of $G$ is finite, then there is a continuous embedding 
$L^H(G) \hookrightarrow L^1(G)$.
Examples of Orlicz spaces are 
$L^p(G):=L^{H(G)}$ when $H(t)=t^p$, $1<p<\infty$, 
and $L\log L(G):=L^H(G)$ when $H(t)=t\log(e+t)$.
For more information about Orlicz spaces we refer to \cite[Section 8]{AF} and
\cite[Section 6.3]{G}.

The space of locally integrable functions defined on an open set $G$ is written as $L^1_{\loc}(G)$.
We recall that the space
$ L^1_p(G)\,,$
$1\le p <\infty$, is a space of distributions on $G$ with the first order derivatives in the space $L^p(G)$.

An open ball with a center $x$ and radius $r>0$ is written as
$B(x,r)$. The corresponding closed ball is denoted by $\overline{B}(x,r)$.
Given any proper subset $A$ of $\Rn$ and any $x\in\Rn$, 
the distance between $x$ and the boundary $\partial A$ is written as
$\dist (x,\partial A)$, and $\diam (A)$ stands for the diameter of $A$.
The characteristic function of a set $A$ is denoted by $\chi_A$.
When $A$ in $\Rn$  is a Lebesgue measurable set with positive $n$-Lebesgue measure $|A|$ we write 
the integral average of 
an integrable function $u$ in $A$ as
\begin{equation*}
u_A=\vint_A u(x)\,dx=\vert A\vert ^{-1}\int_A u(x)\,dx\,.
\end{equation*}

We let $C(*,\cdots ,*)$ denote a constant which depends on the quantities appearing in the parentheses only.
In the calculations from one line to the next line we usually write $C$ for constants when it is not important to specify 
constants'
dependence on the quantities appearing in the calculations. From line to line $C$ might stand for a different constant.

\section{Pointwise estimates  for a modified Riesz potential}\label{estimates}

The classical Hardy-Littlewood maximal function is written as 
\[
Mf(x) = \sup_{r>0} \vint_{B(x,r)} |f(y)| \, dy
\]
where
$f$ is a
locally integrable function defined on $\Rn$, \cite[Section 1]{S}.
We give two pointwise estimates by using the Hardy-Littlewood maximal operator in Lemmas \ref{lem:pointwise-maximal-1}
and \ref{lem:pointwise-maximal-2}.
Lars Inge Hedberg stated and proved the corresponding
results when $\varphi (t)=t$, \cite[Lemma (a), (b)]{Hed72}.  
Cianchi and Stroffolini used the Hedberg method 
for the classical Riesz potential when functions are Orlicz functions, 
\cite[Theorem 1 , Corollary 1]{Cianchi_Stroffolini}.

\begin{lemma}\label{lem:pointwise-maximal-1}
Let $\varphi : [0,\infty )\to [0,\infty )$ be
a continuous, strictly increasing function.
Let $h : [0,\infty )\to [0,\infty )$ be a continuous function with the condition \eqref{h_sum}.
Let  $\delta >0$ be given.
If $f \in L^1_{\loc}(\Rn)$, then there exists a constant $C(n)$ such that 
the inequality
\[
\int_{B(x, \delta)}  \frac{|f(y)|}{\varphi( \vert x-y\vert )^{n-1}}\, dy\le C(n)h(\delta )Mf(x)
\]
holds 
for every $x \in \Rn$.
\end{lemma}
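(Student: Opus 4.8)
The plan is to decompose the ball $B(x,\delta)$ into dyadic annuli and estimate the integral over each annulus by the maximal function. Specifically, write
\[
B(x,\delta) = \bigcup_{k=1}^{\infty} A_k, \qquad A_k := B(x, 2^{-k+1}\delta) \setminus B(x, 2^{-k}\delta),
\]
so that on $A_k$ one has $|x-y| \ge 2^{-k}\delta$, hence by monotonicity of $\varphi$,
\[
\varphi(|x-y|)^{n-1} \ge \varphi(2^{-k}\delta)^{n-1}.
\]
Therefore
\[
\int_{B(x,\delta)} \frac{|f(y)|}{\varphi(|x-y|)^{n-1}}\, dy
\le \sum_{k=1}^{\infty} \frac{1}{\varphi(2^{-k}\delta)^{n-1}} \int_{B(x,2^{-k+1}\delta)} |f(y)|\, dy.
\]

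Next I would bound each ball integral by the maximal function. Since $\int_{B(x,2^{-k+1}\delta)} |f(y)|\, dy = |B(x,2^{-k+1}\delta)|\, \vint_{B(x,2^{-k+1}\delta)} |f(y)|\, dy \le \omega_n (2^{-k+1}\delta)^n\, Mf(x)$, where $\omega_n$ is the volume of the unit ball, we get
\[
\int_{B(x,\delta)} \frac{|f(y)|}{\varphi(|x-y|)^{n-1}}\, dy
\le \omega_n\, 2^n\, Mf(x) \sum_{k=1}^{\infty} \frac{(2^{-k}\delta)^n}{\varphi(2^{-k}\delta)^{n-1}}.
\]
Now I invoke hypothesis \eqref{h_sum} with $t = \delta$: the series is bounded by $h(\delta)$. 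This yields the claim with $C(n) = \omega_n 2^n$.

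The argument is essentially routine; the only point requiring a little care is handling the innermost behavior near $y = x$ — one must check that the decomposition genuinely exhausts $B(x,\delta)$ up to the single point $x$ (measure zero) and that the series manipulation is legitimate, i.e. the partial sums converge, which is exactly what \eqref{h_sum} guarantees. I would also remark that condition \eqref{varphi_control} is not needed for this lemma: only monotonicity of $\varphi$ and the summability hypothesis \eqref{h_sum} enter. The main (very mild) obstacle is simply being careful that the factor $2^n$ from passing between the radii $2^{-k}\delta$ and $2^{-k+1}\delta$ is absorbed cleanly into the dimensional constant, so that the bound $h(\delta)$ appears with the correct argument.
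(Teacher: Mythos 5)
Your proof is correct and follows exactly the same dyadic-annulus decomposition as the paper's, with the same use of monotonicity of $\varphi$, the bound by $Mf(x)$ on each ball, and the final application of hypothesis \eqref{h_sum}. Your remark that \eqref{varphi_control} is not needed here is also consistent with the paper, which does not invoke it in this lemma.
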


\begin{proof}
Let $x\in\Rn$ be fixed and let $\delta$ be given.
Let us divide the ball $B(x, \delta)$ into annuli. By
bringing in the Hardy-Littlewood maximal operator and
by using inequality \eqref{h_sum} we obtain
\[
\begin{split}
\int_{B(x, \delta)}  \frac{|f(y)|}{\varphi(  |x-y|)^{n-1}} \, dy
&\le \sum_{k=1}^\infty \varphi( \delta 2^{-k})^{1-n} \int_{\{z:2^{-k} \delta \le |x-z|< 2^{-k +1} \delta\}} |f(y)| \, dy \\
&\le C(n) \sum_{k=1}^\infty \frac{(2^{-k} \delta)^n}{\varphi(\delta 2^{-k})^{n-1}}  \vint_{\{z: |x-z|< 2^{-k +1} \delta\}} |f(y)| \, dy \\
&\le C(n) Mf(x) \sum_{k=1}^\infty \frac{(2^{-k} \delta)^n}{\varphi(\delta 2^{-k})^{n-1}} \le C(n)Mf(x)h(\delta ). \qedhere
\end{split}
\]
\end{proof}

We consider the integral over the set $\Rn\backslash B(x,\delta )$, too.

\begin{lemma}\label{lem:pointwise-maximal-2}
Let  $\varphi : [0,\infty )\to [0,\infty )$ be
a continuous strictly increasing function such that
inequality \eqref {varphi_control} holds.
Let $1\le p<n$. Let $\delta >0$  be given.
If $\|f\|_{L^p(\Rn)} \le 1$, then there is a constant $C$, depending on $n$, $p$, and $C_\varphi$ only  such that 
the inequality
\[
\int_{\Rn\setminus B(x, \delta)}  \frac{|f(y)|}{\varphi(  |x-y|)^{n-1}} \, dy 
\le C  \varphi(\delta)^{1-n}\delta^{n(1-\frac{1}{p})}
\]
holds
for every $x\in \Rn$.
\end{lemma}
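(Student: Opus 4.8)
The plan is to estimate the integral over the complement of the ball by a dyadic decomposition into annuli going outward, and to apply Hölder's inequality on each annulus together with the normalization $\|f\|_{L^p(\Rn)}\le 1$. Concretely, write $\Rn\setminus B(x,\delta) = \bigcup_{k=0}^\infty A_k$, where $A_k=\{y : 2^k\delta \le |x-y| < 2^{k+1}\delta\}$. On $A_k$ we have $\varphi(|x-y|)\ge \varphi(2^k\delta)$ since $\varphi$ is increasing, so
\[
\int_{\Rn\setminus B(x,\delta)}\frac{|f(y)|}{\varphi(|x-y|)^{n-1}}\,dy
\le \sum_{k=0}^\infty \varphi(2^k\delta)^{1-n}\int_{A_k}|f(y)|\,dy .
\]
By Hölder's inequality with exponents $p$ and $p'=p/(p-1)$ (and the trivial bound $\int_{A_k}|f|^p \le \|f\|_{L^p(\Rn)}^p \le 1$), we get $\int_{A_k}|f(y)|\,dy \le |A_k|^{1-1/p} \le C(n)\,(2^{k}\delta)^{n(1-1/p)}$. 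When $p=1$ the factor $|A_k|^{1-1/p}$ is simply $1$ and the same bound holds with the convention $n(1-1/p)=0$, so the case $p=1$ needs no separate treatment here.

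The key remaining step is to sum the series $\sum_{k=0}^\infty \varphi(2^k\delta)^{1-n}(2^k\delta)^{n(1-1/p)}$ and show it is controlled by $\varphi(\delta)^{1-n}\delta^{n(1-1/p)}$. This is where hypothesis \eqref{varphi_control} enters: applied with $t_1=\delta$ and $t_2=2^k\delta$ it gives $\varphi(2^k\delta)\ge C_\varphi^{-1}2^k\varphi(\delta)$, hence $\varphi(2^k\delta)^{1-n}\le C_\varphi^{n-1}2^{k(1-n)}\varphi(\delta)^{1-n}$. Therefore the $k$-th term is at most $C_\varphi^{n-1}\varphi(\delta)^{1-n}\delta^{n(1-1/p)}\, 2^{k(1-n)}2^{kn(1-1/p)} = C_\varphi^{n-1}\varphi(\delta)^{1-n}\delta^{n(1-1/p)}\,2^{-kn/p}$. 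Since $n/p>0$, the geometric series $\sum_{k\ge 0}2^{-kn/p}$ converges to a constant depending only on $n$ and $p$, which yields the claimed bound with a constant depending only on $n$, $p$, and $C_\varphi$.

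The only genuine obstacle is making sure the decay exponent comes out positive, i.e. that the combined power of $2^k$ is $(1-n)+n(1-1/p) = -n/p < 0$; this is exactly where the restriction $p<n$ is not needed but $p\ge 1$ (finite) and $n\ge 2$ suffice, and it is the place where condition \eqref{varphi_control} is essential — without the comparison $\varphi(t)/t$ being essentially decreasing, the annuli could contribute a divergent sum. Everything else is the routine Hölder estimate and the elementary volume computation $|A_k|=\omega_n((2^{k+1}\delta)^n-(2^k\delta)^n)\le \omega_n 2^n (2^k\delta)^n$.
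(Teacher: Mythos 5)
Your proof is correct in spirit but contains an arithmetic error that leads to a wrong side-claim, and your route is genuinely different from the paper's, so both points deserve comment.

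\textbf{On the approach.} You estimate the tail by a dyadic decomposition into annuli going outward, apply H\"older on each annulus, and then invoke \eqref{varphi_control} to turn $\varphi(2^k\delta)^{1-n}$ into $C_\varphi^{\,n-1}2^{k(1-n)}\varphi(\delta)^{1-n}$ and sum a geometric series. The paper instead applies H\"older once over all of $\Rn\setminus B(x,\delta)$, then rewrites $\varphi(|x-y|)^{-n}$ as a constant times the value of the Hardy--Littlewood maximal function of the normalized indicator $\chi_{B(x,\delta)}|B(x,\delta)|^{-1}$ at $y$, using \eqref{varphi_control} through the ratio $|B(y,2t)|/|B(y,\varphi(t))|\le C(\delta/\varphi(\delta))^n$, and concludes from boundedness of $M$ on $L^{(n-1)p'/n}(\Rn)$. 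Your route is more elementary (no maximal operator), at the cost of a slightly less sharp per-annulus use of H\"older (you bound each $\int_{A_k}|f|^p$ by the full $L^p$ mass rather than exploiting disjointness of the $A_k$, which is harmless here because of the geometric decay). The paper's route makes the passage to Lemma~\ref{lem:pointwise-maximal-1} look more uniform, since both lemmas are then phrased via $M$.

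\textbf{The error.} You write that the combined power of $2^k$ in the summand is
\[
(1-n)+n\Big(1-\tfrac1p\Big)=-\tfrac{n}{p},
\]
and conclude that the restriction $p<n$ is not needed. This is an arithmetic slip: the left side equals $1-n+n-\tfrac{n}{p}=1-\tfrac{n}{p}$, not $-\tfrac{n}{p}$. The series $\sum_{k\ge 0} 2^{k(1-n/p)}$ converges precisely when $1-n/p<0$, i.e. when $p<n$. So the hypothesis $p<n$ is needed in your argument exactly as in the paper's argument (there it appears as the requirement $1<(n-1)p'/n<\infty$ for the $L^q$-boundedness of $M$). The final estimate you obtain is still correct under the stated hypothesis $1\le p<n$; only the claim that $p<n$ can be dropped is wrong, and you should also replace $-n/p$ by $1-n/p$ throughout and verify that the constant from $\sum_{k\ge 0}2^{k(1-n/p)}=\big(1-2^{1-n/p}\big)^{-1}$ depends only on $n$ and $p$.
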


The inequality in Lemma \ref{lem:pointwise-maximal-2} has been proved for the function
$\varphi (t)=t^{\alpha}/\log^{\beta} (e+t^{-1})$ when
$1\le\alpha < 1+1/(n-1))$ and $\beta\geq 0$ in \cite[Lemma 3.2]{HH-S}. 
The proof here is a generalization of this earlier result. We give the proof for the sake of completeness.

\begin{proof}[Proof of Lemma \ref{lem:pointwise-maximal-2}]
Suppose that $1<p<n$ and
let us write $p'=p/(p-1)$. Let the point $x\in \Rn$ be 
fixed  and let $\delta >0$ be given.
By H\"older's inequality we obtain
\[
\begin{split}
\int_{\Rn\setminus B(x, \delta)}  \frac{|f(y)|}{\varphi(|x-y|)^{n-1}} \, dy  
&\le \|f\|_{L^{p}(\Rn)} \big\|\chi_{\Rn\setminus B(x, \delta)} \varphi( |x- \cdot |)^{1-n} \big\|_{L^{p'}(\Rn)}\\
&\le \big\|\chi_{\Rn\setminus B(x, \delta)} \varphi(|x- \cdot |)^{-n} \big\|
^{(n-1)/n}
_{L^{(n-1)p'/n}(\Rn)}.
\end{split}
\]
For every $y\in \Rn\setminus B(x, \delta)$,
\[
\begin{split}
\varphi( |x- y |)^{-n} & = C(n) |B(y, \varphi( |x- y |))|^{-1}\\
&=  C(n) \vint_{B(y,  2 |x- y |)} \chi_{B(x, \delta)}(z) |B(x, \delta)|^{-1} \frac{|B(y,  2 |x- y |)|}{|B(y, \varphi( |x- y |))|}  \, dz.\\
\end{split}
\]
By assumption \eqref{varphi_control} we obtain that 
\[
\frac{|B(y,  2 t)|}{|B(y, \varphi( t))|}  \le  C(n, C_\varphi) \left(\frac{\delta}{\varphi(\delta)} \right)^n
\]
for every $t \ge \delta$.   Hence,
\[
\varphi( |x- y |)^{-n} \le C(n, C_\varphi) \left(\frac{\delta}{\varphi(\delta)} \right)^n 
 M \Big(\chi_{B(x, \delta)} |B(x,   \delta)|^{-1}\Big)(y)
\]
for every $y\in \Rn\setminus B(x, \delta)$.

Since $1<p<n$,
we have  $1<\frac{n-1}{n} p'<\infty$. Thus, the Hardy-Littlewood maximal operator is bounded 
in 
$L^{(n-1)p'/n}(\Rn )$, \cite[Section 1, Theorem 1(c)]{S}, and we obtain
\[
\begin{split}
&\big\|\chi_{\Rn\setminus B(x, \delta)} \varphi(  |x- \cdot |)^{-n} \big\|^{(n-1)/n}_{L^{(n-1)p'/n}(\Rn)}\\ 
&\qquad \le C(n, C_\varphi) \left(\frac{\delta}{\varphi(\delta)} \right)^{n-1}
\left \|M \Big(\chi_{B(x, \delta)} |B(x, \delta)|^{-1}\Big) \right \|^{(n-1)/n}_{L^{(n-1)p'/n}(\Rn)} \\
& \qquad\le  C(n,C_\varphi,p) \left(\frac{\delta}{\varphi(\delta)} \right)^{n-1}\left \|\chi_{B(x, \delta)} |B(x, \delta)|^{-1} \right \|^{(n-1)/n}_{L^{(n-1)p'/n}(\Rn)}\\
& \qquad\le  C(n,C_\varphi,p) \varphi(\delta)^{1-n}\left \|\chi_{B(x, \delta)} \right \|^{(n-1)/n}_{L^{(n-1)p'/n}(\Rn)}\\
&\qquad\le C(n,C_\varphi, p) \varphi(  \delta)^{1-n} \delta^{\frac{n}{p'}}.
\end{split}
\]
Hence, the claim is proved whenever $1<p<\infty$.

If $p=1$, and
$\delta >0$  is given, and
$\|f\|_{L^1(\Rn)} \le 1$, then  
\[
\int_{\Rn\setminus B(x, \delta)}  \frac{|f(y)|}{\varphi(|x-y|)^{n-1}} \, dy 
\le {\varphi(\delta)^{1-n}} \int_{\Rn\setminus B(x, \delta)} |f(y)| \, dy
\le \varphi(\delta)^{1-n}. \qedhere
\]
\end{proof}

\begin{remark}\label{rem:pointwise-maximal-2}
Let us assume that $\varphi(t) = \frac{t^{\alpha}}{\log ^{\beta }(e+t^{-1})}$, $\alpha \in \left[1, 1+ \frac{1}{n-1} \right)$ and $\beta \geq 0$. We use this $\varphi$ in 
 Corollary~\ref{main_corollary}. 
 In this case the restriction $p<n$ in Lemma~\ref{lem:pointwise-maximal-2}
can be replaced  by the inequality  $p<n/(n-\alpha(n-1))$.  This yields that in Theorems \ref{thm:pointwise-maximal}, \ref{thm:Riesz-is-bounded} and \ref{thm:main_application}
the restriction $p<n$ can be replaced  by $p<n/(n-\alpha(n-1))$.
In the  proof of Lemma~\ref{lem:pointwise-maximal-2} we may estimate the term
$\big\|\chi_{\Rn\setminus B(x, \delta)} \varphi( |x- \cdot |)^{1-n} \big\|_{L^{p'}(\Rn)}$ by using the following calculation:
\[
\begin{split}
&\big\|\chi_{\Rn\setminus B(x, \delta)} \varphi( |x- y |)^{1-n} \big\|_{L^{p'}(\Rn)}
= \left ( \int_{\Rn\setminus B(x, \delta)} \varphi( |x- \cdot |)^{p'(1-n)} \, dy \right )^{\frac1{p'}}\\
&\quad = \left ( C(n) \int_{\delta}^\infty \varphi( t)^{p'(1-n)} t^{n-1} \, dt \right )^{\frac1{p'}}\\
&\quad= C(n, p) \left ( \int_{\delta}^\infty  t^{\alpha p'(1-n)+n-1} \log^{\beta p' (n-1) }(e+t^{-1})  \, dt \right )^{\frac1{p'}}\\
&\quad\le  C(n, p) \log ^{\beta (n-1) }(e+\delta^{-1}) \left ( \int_{\delta}^\infty  t^{\alpha p'(1-n)+n-1}   \, dt \right )^{\frac1{p'}}.
\end{split}
\]
The last integral is finite if $\alpha p'(1-n)+n <0$. In this case we obtain
\[
\begin{split}
\big\|\chi_{\Rn\setminus B(x, \delta)} \varphi( |x- y |)^{1-n} \big\|_{L^{p'}(\Rn)}
&\le C(n, p) \log ^{\beta (n-1) }(e+\delta^{-1}) \delta^{\alpha(1-n)+\frac{n}{p'}} \\
&= \varphi(\delta)^{1-n} \delta^{n(1-\frac1p)}.
\end{split}
\]
\end{remark}

Now we are ready for the proofs of the main theorems.

\begin{proof}[Proof of Theorem \ref{thm:pointwise-maximal}]
We may assume that $Mf(x)>0$, since otherwise $f(x)=0$ almost everywhere.
By Lemmas~\ref{lem:pointwise-maximal-1} and \ref{lem:pointwise-maximal-2} 
there exists a constant $C$ such that
we obtain 
\begin{equation*}\label{main_riesz_inequality_1}
\begin{split}
&\int_{G}  \frac{|f(y)|}{\varphi(  |x-y|)^{n-1}} \, dy \le 
\int_{\Rn}  \frac{|f(y)|}{\varphi(  |x-y|)^{n-1}} \, dy\\
&\quad\le C h(\delta(Mf(x)) ) Mf(x)
+ C  \varphi( \delta(Mf(x)))^{1-n}
(Mf(x))^{n(1-\frac{1}{p})}\,
\end{split}
\end{equation*}
for every $x$ in $\Rn$.
Condition \eqref{sum} implies
for all $x$ in $\Rn$  
\[
H \left ( \int_{D}  \frac{|f(y)|}{\varphi( |x-y|)^{n-1}} \, dy \right) \le C (Mf(x))^p. \qedhere
\]
\end{proof}

This pointwise estimate gives the boundedness of the Luxemburg norm of the modified Riesz potential.

\begin{proof}[Proof for Theorem \ref{thm:Riesz-is-bounded}]
Suppose that $1<p<n$.
Let us assume that  $\| f \|_{L^p(G)} \le 1$.  Then by 
Theorem~\ref{thm:pointwise-maximal} the inequality
\begin{equation*}
H\left ( \int_{G}  \frac{|f(y)|}{\varphi(  |x-y|)^{n-1}} \, dy\right )
\le C \left(M f (x)\right)^p
\end{equation*}
holds
for every $x \in G$. 
Since the Hardy-Littlewood maximal operator
$M: L^p \to L^p$ is bounded whenever $1<p<\infty$,
we obtain by
integrating both sides of this inequality 
over $G$ 
\[
\begin{split}
\int_G H \left ( \int_{G}  \frac{|f(y)|}{\varphi(  |x-y|)^{n-1}} \, dy\right ) \, dx
&\le C \int_G \left(M f (x)\right)^p \, dx\\
&\le C \int_G | f (x) |^p \, dx \le C. 
\end{split}
\]

The proof in the case $p=1$  follows in the same way;
but the fact that the maximal operator $M:L \log L \to L^1$ is bounded had to be used instead of the boundedness of
the maximal operator $M:L^p \to L^p $ whenever  $1<p<\infty $.
\end{proof}

Note that if the inequality
\[
\int_G H \left ( \int_{G}  \frac{|f(y)|}{\varphi(  |x-y|)^{n-1}} \, dy\right ) \, dx \le C
\]
holds
for every  $f$ whenever $\|f\|_{L^p(G)}\le 1$,  where $1<p<\infty$, then
\begin{equation*}
\left \|\int_{G}  \frac{|f(y)|}{\varphi(    |x-y|)^{n-1}} \, dy \right\|_{L^H(G)} \le C
\end{equation*}
for every $f$ whenever  $\| f \|_{L^p(G)} \le 1$ and $1<p<\infty$.  The boundedness of the Luxemburg norm  
follows by applying this inequality 
to $f/\|f\|_{L^p(G)}$ whenever $1<p<\infty$.
Arguments in the case $p=1$ are similar.
We state the boundedness of the Luxemburg norm in the following corollary.

\begin{corollary}\label{cor:Riesz-is-bounded}
Let $H$ 
be an Orlicz function and $\varphi$ be an increasing function as in Theorem \ref{thm:pointwise-maximal}.
Let $G$ be an open set in $\Rn$.

If $1<p<n$, then
there exists a constant $C$ such that 
the inequality
\[
\left \|\int_{G}  \frac{|f(y)|}{\varphi( | \cdot -y|)^{n-1}} \, dy \right\|_{L^{H}(G)} \le C \|f\|_{L^p(G)}
\]
holds for every $f \in L^p(G)$. Here the constant $C$ depends on $n$, $p$, $C_\varphi$, $C_H$, and the $\Delta_2$-constant of $H$ only.

If $p=1$, then
there exists a constant $C_1$ such that 
the inequality
\[
\left \|\int_{G}  \frac{|f(y)|}{\varphi( | \cdot -y|)^{n-1}} \, dy \right\|_{L^{H}(G)} \le C_1 \|f\|_{L \log L (G)}
\]
holds for every $f \in L \log L(G)$. Here the constant $C_1$ depends on $n$, $C_\varphi$, $C_H$, and the $\Delta_2$-constant of $H$ only.
\end{corollary}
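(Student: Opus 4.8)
The plan is to deduce the corollary from Theorem \ref{thm:Riesz-is-bounded} by a rescaling (homogeneity) argument, exactly along the lines sketched in the note following that theorem. First I would dispose of the trivial case $f=0$, for which both sides vanish. For $f\neq 0$ and $1<p<n$, I would set $g=f/\|f\|_{L^p(G)}$, so that $\|g\|_{L^p(G)}=1$, and apply Theorem \ref{thm:Riesz-is-bounded} to $g$ to obtain
\[
\int_G H\left(\int_G \frac{|g(y)|}{\varphi(|x-y|)^{n-1}}\,dy\right)dx \le C,
\]
where $C$ depends only on $n$, $p$, $C_\varphi$, $C_H$, and the $\Delta_2$-constant of $H$; crucially $C$ is independent of $f$, so it survives the rescaling performed at the end.

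The second step is to pass from this integral (modular) bound to a bound on the Luxemburg norm. Writing $u(x)=\int_G |g(y)|\,\varphi(|x-y|)^{1-n}\,dy$ and assuming without loss of generality that $C\ge 1$, convexity of $H$ together with $H(0)=0$ gives the pointwise inequality $H(u(x)/C)\le H(u(x))/C$, whence $\int_G H(u(x)/C)\,dx\le 1$, which by the very definition of the Luxemburg norm means $\|u\|_{L^H(G)}\le C$. (The $\Delta_2$-condition is not needed for this particular step; it enters only inside Theorem \ref{thm:Riesz-is-bounded}, where it guarantees that $L^H(G)$ is the usual Orlicz space and that the constant has the stated form.)

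The final step is to undo the normalization. Since the modified Riesz potential is positively homogeneous in $|f|$ and the Luxemburg norm is homogeneous, one has
\[
\left\|\int_G \frac{|f(y)|}{\varphi(|\cdot-y|)^{n-1}}\,dy\right\|_{L^H(G)} = \|f\|_{L^p(G)}\,\|u\|_{L^H(G)} \le C\,\|f\|_{L^p(G)},
\]
which is the asserted inequality in the case $1<p<n$. The case $p=1$ is handled identically: one replaces $\|f\|_{L^p(G)}$ throughout by $\|f\|_{L\log L(G)}$, normalizes $g=f/\|f\|_{L\log L(G)}$, and invokes the $p=1$ part of Theorem \ref{thm:Riesz-is-bounded}; this yields the constant $C_1$ with the stated dependence on $n$, $C_\varphi$, $C_H$, and the $\Delta_2$-constant. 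I do not anticipate a genuine obstacle here; the only points requiring a touch of care are the convexity estimate used to turn the modular bound into a Luxemburg-norm bound and the verification that the constant of Theorem \ref{thm:Riesz-is-bounded} does not depend on $f$, so that the dilation argument closes.
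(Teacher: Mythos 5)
Your proposal is correct and follows essentially the same route as the paper: the paper's proof is precisely the one-paragraph note preceding the corollary, which passes from the modular bound of Theorem~\ref{thm:Riesz-is-bounded} to a Luxemburg-norm bound and then rescales via $f/\|f\|_{L^p(G)}$ (respectively $f/\|f\|_{L\log L(G)}$). Your additional observation that the modular-to-Luxemburg step uses convexity of $H$ together with $H(0)=0$ is exactly the (unstated) reason the paper's note is valid, so it fills a small gap rather than changing the argument.
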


\section{Pointwise estimates for functions defined on  irregular domains}\label{domains}

We are going to give new embedding results for
$L^1_p$-functions, which are defined on domains with fractal boundaries. We recall the definition of
very irregular John domains and give an integral representation to functions defined on these domains.

\begin{definition}\label{john}
Let $\varphi : [0,\infty )\to [0,\infty )$ be a continuous, strictly increasing function.
A bounded domain $D$ in $\Rn\,, n\geq 2\,,$ is a $\varphi$-John domain if there exist a constant $c_J >0$ and a point $x_0 \in D$ such that each point $x\in D$ can be joined to $x_0$ by a rectifiable curve $\gamma:[0,l] \to D$, parametrized by its arc length, such that $\gamma(0) = x$, $\gamma(l) = x_0$, $l\leq c_J\,,$ and
\[
\varphi(t) \leq c_J \dist\big(\gamma(t), \partial D\big) \quad \text{for all} \quad t\in[0, c_J].
\]
The point $x_0$ is called a John center of $D$ and the constant $c_J$  is called a John constant of $D$.
\end{definition}
If a domain is a $\varphi$-John domain with a John center $x_0$, then it is a $\varphi$-John domain with any other $x \in D$,  but the John constant might be different.

Lipschitz domains, classical John domains, and the so called $s$-John domains
are examples of these domains.
But there are more irregular domains such as the mushrooms domain studied  in \cite[6. Example]{HH-S} and in \cite[6. Example]{HH-SK}.

The following lemma is needed to prove a pointwise integral representation to $L^1_1$-functions defined on a $\varphi$-John domain.
Lemma \ref{lem:balls} 
is a generalization of  \cite[Theorem 9.3]{Hajlasz-Koskela} 
where the classical John domain, corresponding to the case $\varphi (t)=t$, is considered. For the function
$\varphi (t)=t/\log (e+t^{-1})$ the corresponding result
has been proved in \cite[Lemma 3.5]{HH-SK}.
The definition of $\varphi$ affects to the property $(2)$ in Lemma \ref{lem:balls}.  The following inequality \eqref{varphi_bdd} is needed: 
There exists a constant $C'_\varphi$ depending on $\varphi$ and $c_J$ only such that
\begin{equation}\label{varphi_bdd}
\varphi (t)\le   C'_\varphi t \quad \text{for all}\quad  t \in [0, c_J].
\end{equation}
Namely, for a given John domain with a John constant $c_J$ by inequality \eqref{varphi_control} there exists  
a constant $C_\varphi$ such that 
\begin{equation*}
\varphi (t)\le  C_\varphi \frac{\varphi(c_J)}{c_J} t =: C'_\varphi t \quad \text{for all}   \quad  t \in [0, c_J].
\end{equation*}

\begin{lemma}\label{lem:balls}
Let $\varphi :[0,\infty )\to [0,\infty )$ be a continuous, strictly increasing $\Delta _2$-function satisfying 
inequality \eqref{varphi_bdd}.
 Let $D$ in $\Rn\,, n\geq 2\,,$ be a $\varphi$-John domain 
with a John constant $c_J$ and  a John center $x_0 \in D$. 
Then for every $x\in D\setminus B(x_0, \dist(x_0, \partial D))$ there exists a sequence of balls $\big(B(x_i, r_i)\big)$ such that $B(x_i, 2r_i)$ is in $D\,,$
$i=0,1,\dots\,,$ and
for some constants $K=K(c_J, C'_\varphi)$, $N=N(n)$, and $M=M(n)$ 
\begin{enumerate}
\item
$B_0 = B\Big(x_0, \frac12 \dist(x_0,  \partial D)\Big)$;
\item
$\varphi(\dist(x, B_i))\leq K r_i$, and $r_i \to 0 $ as $i\to \infty$;
\item
no point of the domain $D$ belongs to more than $N$ balls $B(x_i, r_i)$; and
\item
$|B(x_i, r_i) \cup B(x_{i+1}, r_{i+1})| \leq M |B(x_i, r_i) \cap B(x_{i+1}, r_{i+1})|$.
\end{enumerate}
\end{lemma}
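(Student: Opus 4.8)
The plan is to adapt the classical chain-of-balls construction for John domains (as in \cite[Theorem 9.3]{Hajlasz-Koskela} and its weighted variant \cite[Lemma 3.5]{HH-SK}) to the present $\varphi$-John setting. Fix $x\in D\setminus B(x_0,\dist(x_0,\partial D))$ and let $\gamma:[0,l]\to D$ be the John curve joining $x=\gamma(0)$ to $x_0=\gamma(l)$, parametrized by arc length, with $l\le c_J$ and $\varphi(t)\le c_J\dist(\gamma(t),\partial D)$ for $t\in[0,c_J]$. The idea is to sample the curve at a geometrically decreasing sequence of parameter values: starting from the endpoint near $x$, one picks times $t_i$ so that consecutive balls centered on $\gamma$ with radii comparable to $\dist(\gamma(t_i),\partial D)$ overlap substantially, and so that the radii shrink to $0$ as $i\to\infty$ while the centers converge to $x$. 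Concretely, I would set $B_0=B\big(x_0,\tfrac12\dist(x_0,\partial D)\big)$ to secure property (1), and for $i\ge 1$ choose $x_i=\gamma(t_i)$ and $r_i=c\,\dist(x_i,\partial D)$ for a small dimensional constant $c$, with the $t_i$ chosen along a dyadic-type scale adapted to the modulus of continuity of $t\mapsto\dist(\gamma(t),\partial D)$; the John condition guarantees $\dist(\gamma(t),\partial D)\ge\varphi(t)/c_J\to 0$ as $t\to0^+$, which forces $r_i\to 0$ and $x_i\to x$.

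The four properties are then verified in turn. Property (1) holds by definition of $B_0$. For property (2), the key point is that $\dist(x,B_i)\le|x-x_i|\le t_i$ (arc length bounds the chordal distance), so $\varphi(\dist(x,B_i))\le\varphi(t_i)$; combining the John inequality $\varphi(t_i)\le c_J\dist(\gamma(t_i),\partial D)$ with the choice $r_i=c\,\dist(x_i,\partial D)$ gives $\varphi(\dist(x,B_i))\le (c_J/c)\,r_i=:K r_i$, and here I would use inequality \eqref{varphi_bdd} and the $\Delta_2$-property of $\varphi$ only to make the constant $K$ depend on $c_J$ and $C'_\varphi$ in the stated form (e.g.\ when comparing $\varphi$ at nearby scales after the geometric sampling). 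Property (4), the overlap of consecutive balls, follows from choosing the sampling step small enough relative to $c$ that $|x_i-x_{i+1}|\le\tfrac12\min(r_i,r_{i+1})$ together with the fact that consecutive distance-to-boundary values are comparable (a Harnack-type estimate for $\dist(\cdot,\partial D)$ along a unit-speed curve, which is $1$-Lipschitz); this yields $B(x_{i+1},r_{i+1})\subset B(x_i,Cr_i)$ and reversely, giving the volume comparison with $M=M(n)$. Property (3), bounded overlap, is the standard consequence of the comparability of the radii to $\dist(\cdot,\partial D)$ plus a Besicovitch/volume-counting argument: if a point $z$ lies in many $B(x_i,r_i)$, all those balls have radii and centers mutually comparable and are contained in a fixed ball around $z$ of comparable radius, so their number is bounded by $N=N(n)$.

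The main obstacle I anticipate is making property (2) quantitative with the right dependence of $K$: one must control $\varphi$ across the (bounded) ratio of scales introduced by the sampling without assuming any lower regularity of $\varphi$ beyond monotonicity, continuity, \eqref{varphi_control}, and $\Delta_2$. The $\Delta_2$-condition on $\varphi$ is exactly what lets one pass from $\varphi(t_i)$ to $\varphi$ evaluated at the comparable radius $r_i$ (or vice versa) at the cost of a constant depending only on $C'_\varphi$ (hence on $\varphi$ and $c_J$), and \eqref{varphi_bdd} is what keeps the geometry genuinely John-like near the center. A secondary technical point is ensuring simultaneously that $B(x_i,2r_i)\subset D$ (immediate from $r_i\le\tfrac12\dist(x_i,\partial D)$ if $c\le\tfrac14$) and that the chain actually reaches from $B_0$ down to arbitrarily small scales at $x$ in a connected, finitely-overlapping way — this is handled by the standard Whitney-type stopping-time argument along $\gamma$, so once the $\varphi$-scaling bookkeeping in (2) is in place, the rest is routine.
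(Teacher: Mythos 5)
Your overall plan — balls centered on the John curve, radii proportional to a distance function, verify (2)–(4) via monotonicity of $\varphi$ and a volume-counting/bounded-overlap argument — is the same strategy the paper uses. However, there is a genuine gap in the choice of radii that breaks the construction.

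You take $r_i=c\,\dist(x_i,\partial D)$ and then claim the John inequality $\dist(\gamma(t),\partial D)\ge\varphi(t)/c_J\to 0$ ``forces $r_i\to 0$ and $x_i\to x$.'' This inference is backwards: the John condition gives a \emph{lower} bound on the distance to the boundary along $\gamma$, which tending to zero says nothing about $\dist(\gamma(t),\partial D)$ itself. Since $x$ need not lie near $\partial D$ (it only has to lie outside the central ball $B(x_0,\dist(x_0,\partial D))$), the quantity $\dist(\gamma(t),\partial D)$ can stay bounded away from zero as $t\to 0^+$, so with your choice $r_i$ does not shrink and the chain never reaches down to $x$, which is essential both for property (2) and later for the telescoping argument in Theorem~\ref{thm:pointwise-John}. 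The paper avoids this by setting $r_i=\tfrac12\dist\big(x_i,\partial D\cup\{x\}\big)$: including $x$ in the obstacle set is exactly what forces the radii to collapse near $x$, and it also cleanly splits the verification of (2) into two cases. When $r_i$ is governed by $\dist(x_i,\partial D)$, one uses the John condition and monotonicity of $\varphi$ exactly as you do; but when $r_i$ is governed by $\dist(x_i,x)$, the John inequality is useless and one must invoke \eqref{varphi_bdd} to get $\varphi(\dist(x,B_i))\le C'_\varphi\,\dist(x,B_i)\le 2C'_\varphi r_i$. Your write-up does not account for this second case because your $r_i$ never sees $x$. Once the radii are corrected, the two-case analysis for (2) and (4) and the Besicovitch counting for (3) go through essentially as you sketch and as the paper carries out.

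A secondary, smaller point: the bounded overlap (3) and the shrinking of the radii are obtained in the paper by first taking a Besicovitch cover of $\{\gamma(t):t\in[0,l]\}\setminus\{x\}$ and then \emph{reordering and discarding} balls so that the chain marches monotonically from $B_0$ toward $x$; some such pruning step is needed in your argument as well, since the raw dyadic sampling of $t_i$ does not by itself guarantee either the chain structure needed for (4) or the limit $x_i\to x$.
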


\begin{proof}
Let $x\in D\setminus B(x_0, \dist(x_0, \partial D))$. 
Let $\gamma$ be a John curve joining $x$ to $x_0$,  its arc length written as
$l$.
 We write 
\[
B'_0 = B\Big(x_0, \frac14 \dist\big(x_0, \partial D \big)\Big)
\] 
and consider the balls $B'_0$ and
\[
B \Big(\gamma(t), \frac14 \dist\big(\gamma(t), \partial D \cup \{x\}\big)\Big),
\]
where $t\in(0,l)$.
By the Besicovitch covering theorem,  there is a sequence of closed balls 
\[
\overline{B'_1}, \overline{B'_2}, \ldots \quad\text{and}\quad \overline{B'_0}
\]
that cover the set $\{\gamma(t): t\in[0, l] \}\setminus \{x\}$ and have a uniformly bounded overlap depending on $n$ only. We  write $B(x_i,r_i)= 2B'_i$ for every
$i=0, 1\,, 2\,, \ldots$, where $x_i= \gamma(t_i)$,  $t_i\in (0,l)$,
and $r_i = \frac12 \dist\big(x_i, \partial D \cup \{x\}\big)$. 

By the fact that $\varphi$ is an increasing function and by the definition of  $\varphi$-John domain we obtain 
\begin{equation*}
 \varphi(\dist(x, B_0)) \le  \varphi (l)
\le  c_J \dist (x_0,\partial D)= 2 c_J r_0. 
\end{equation*}
Let us suppose then that $i\ge 1$.
If $r_i = \frac12 \dist(x_i, x)$, then
by inequality  \eqref{varphi_bdd} we obtain 
\[
\varphi (\dist(x,B(x_i,r_i)))\leq 
   C'_\varphi \dist(x,B(x_i,r_i)) \le 2 C'_\varphi r_i.
\]
If $r_i = \frac12 \dist(x_i, \partial D)$, then the fact that $\varphi$ is increasing and the definition of a $\varphi$-John domain give 
\[
\varphi (\dist(x,B(x_i,r_i)))\leq 
\varphi (\dist(x,x_i))\leq 
\varphi (t_i)\leq 
  c_J \dist(x_i,\partial D) =2 c_J r_i.
\]
Thus, property (2) holds.

We renumerate the balls $B(x_i,r_i)= 2B'_i$  
and leave out the extra balls.
Let $B_0$ be as before.
Assume that we have chosen balls $B_i$, $i=0,1, \ldots, m$. Then we choose the ball $B_{m+1}$ as follows. We trace along $\gamma$ starting from  $\gamma(t_m)$, which is the centre point of $B_m$, towards $x=\gamma(0)$ and choose the smallest $t_j$ for which $\gamma(t_j) = x_j \in B_{m}$. Note that the smallest $t_j$ exists. Let $\gamma(t')$ be the point where $\gamma$ leaves $B_m$ for the last time when we are going towards $\gamma(0) =x$. Then $\{\gamma(t): t\in[t', l]\}$ is covered finitely many balls $\overline{B'_i}$, since the balls have bounded overlapping and the radii have a uniform lower bound. 
Because of this changing of the picking order of the balls, we obtain that
$r_i \to 0$ and $x_i\to x$, whenever $i\to \infty$.

The point $x$ does not belong to any ball. Let $x'$ be any other point in the domain $D$. The point $x'$ cannot belong to the balls $B_i$ with $3r_i <\dist(x', x)$. If $x' \in B_i$, then 
\begin{equation*}
2 r_i \leq \dist(x, x_i) \leq \dist(x, x') + r_i.
\end{equation*}
Thus, we obtain that $x' \in B_i$ if and only if 
$$\frac13 \dist(x', x) \leq r_i \leq \dist(x, x').$$
The Besicovitch covering theorem implies that the balls with radius of $\frac14$ of the original balls are disjoint. Hence $x'$ belongs to less than or equal to
\[
N \frac{\vert B(x',2r)\vert}{\vert B(0,\frac1{12} r)\vert} = 24^n N
\]
balls $B_i$, where the constant $N$ is from the Besicovitch covering theorem and depends on the dimension $n$ only.
Hence, property (3) holds.

If $r_i = \frac12 \dist(x_i, \partial D)$ and $r_{i+1} = \frac12 \dist(x_{i+1}, \partial D)$, then $r_{i+1} \geq \frac12 r_i$ (since $x_{i+1} \in B_i$) and thus we obtain
\[
\frac{|B_i|}{|B_{i+1}|} \leq \left ( \frac{r_i}{\frac12 r_i}\right)^n = 2^n. 
\]
If $r_i = \frac12 \dist(x_i, x)$ and $r_{i+1} = \frac12 \dist(x_{i+1}, x)$, then $r_{i+1} \geq \frac12 r_i$ and thus we obtain
$|B_i|/|B_{i+1}| \leq 2^n$.   
If $r_i = \frac12 \dist(x_i, \partial D)$ and $r_{i+1} = \frac12 \dist(x_{i+1}, x)$, then  $r_i \leq \frac12 \dist(x_i, x)$ and we obtain the same ratio as before.
Similarly in the case when $r_i = \frac12 \dist(x_i, x)$ and  $r_{i+1} = \frac12 \dist(x_{i+1}, \partial D)$. We have shown
$|B_i| \leq 2^n |B_{i+1}|$. 
In the same manner we obtain $2r_{i+1} \le 3 r_i$ and hence $2^n|B_i| \geq  |B_{i+1}|$.  These yield property (4).
\end{proof}

The following pointwise integral representation
for $L^1_1$-functions defined 
on the classical John domain is well known,
\cite{R}, \cite{Haj01}.
The corresponding integral representation when
$\varphi (t)=t/\log (e+t^{-1})$ is proved in 
\cite[Theorem 3.4]{HH-SK}.
For the sake of completeness we give the proof for the general function $\varphi$ here.
Lemma \ref{lem:balls} is essential to this proof.

\begin{theorem}\label{thm:pointwise-John}
Let $\varphi :[0,\infty )\to [0,\infty )$ be a continuous,  strictly increasing $\Delta_2$-function  satisfying \eqref{varphi_bdd}.
Let $D$ in $\Rn\,, n\geq 2\,,$ be a $\varphi$-John domain
with a John constant $c_J$ and a John center  $x_0$. 
Then there exists a finite constant $C$ such that for every $u\in L^1_1(D)$ and for almost every $x\in D$ 
the inequality
\[
\big|u(x) - u_{B(x_0,  \dist(x_0, \partial D))}\big| \leq C \int_{D}  \frac{|\nabla u(y)|}{\varphi \big( |x-y|\big)^{n-1}} \, dy
\]
holds.
\end{theorem}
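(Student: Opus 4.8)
The plan is to join $x$ to the John center by the chain of balls from Lemma~\ref{lem:balls}, telescope the integral averages of $u$ along the chain, estimate consecutive differences by the $(1,1)$-Poincar\'e inequality on balls, and finally absorb the radii of the chain balls into the weight $\varphi(|x-\cdot|)^{1-n}$. Fix $u\in L^1_1(D)$ and $x\in D$; we may assume $\int_D\varphi(|x-y|)^{1-n}|\nabla u(y)|\,dy<\infty$, since otherwise the inequality is trivial. If $x\in B(x_0,\dist(x_0,\partial D))$, the claim reduces, via the classical pointwise estimate $|u(x)-u_B|\le C(n)\int_B|x-y|^{1-n}|\nabla u(y)|\,dy$ on $B=B(x_0,\dist(x_0,\partial D))$ together with inequality \eqref{varphi_bdd} (which gives $|x-y|^{1-n}\le (C'_\varphi)^{n-1}\varphi(|x-y|)^{1-n}$), to a bound of the required form. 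So assume from now on that $x\in D\setminus B(x_0,\dist(x_0,\partial D))$ and let $\bigl(B_i\bigr)_{i\ge 0}=\bigl(B(x_i,r_i)\bigr)_{i\ge 0}$ be the chain produced by Lemma~\ref{lem:balls}.

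Because $\sum_i|u_{B_{i+1}}-u_{B_i}|$ will be seen to be finite and, as discussed at the end, $u_{B_m}\to u(x)$ for almost every $x$, the telescoping identity $u_{B_m}-u_{B_0}=\sum_{i=0}^{m-1}(u_{B_{i+1}}-u_{B_i})$ passes to the limit, so that
\[
|u(x)-u_{B(x_0,\dist(x_0,\partial D))}|\le |u_{B_0}-u_{B(x_0,\dist(x_0,\partial D))}|+\sum_{i=0}^{\infty}|u_{B_{i+1}}-u_{B_i}|.
\]
The first term is harmless: $B_0$ and $B(x_0,\dist(x_0,\partial D))$ are concentric with radii in ratio $2$, so it is at most $2^n\vint_{B(x_0,\dist(x_0,\partial D))}|u-u_{B(x_0,\dist(x_0,\partial D))}|$, which by the $(1,1)$-Poincar\'e inequality and the boundedness of $D$ is at most a constant (depending on the domain, which is allowed by the statement) times $\int_D\varphi(|x-y|)^{1-n}|\nabla u(y)|\,dy$. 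For the consecutive differences, insert the average over $B_i\cap B_{i+1}$ and use property~(4) of Lemma~\ref{lem:balls}, which makes $|B_i\cap B_{i+1}|$ comparable to $|B_i|$ and to $|B_{i+1}|$; this gives
\[
|u_{B_i}-u_{B_{i+1}}|\le C(M)\Bigl(\vint_{B_i}|u-u_{B_i}|+\vint_{B_{i+1}}|u-u_{B_{i+1}}|\Bigr).
\]
Applying the $(1,1)$-Poincar\'e inequality $\vint_{B_j}|u-u_{B_j}|\le C(n)r_j|B_j|^{-1}\int_{B_j}|\nabla u|$ on each ball and summing, with each ball counted at most twice, reduces the task to estimating $\sum_i r_i|B_i|^{-1}\int_{B_i}|\nabla u|$.

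The crucial step is the conversion of $r_i$ into the weight. For $y\in B_i$ one has $\dist(x,B_i)\le|x-y|\le\dist(x,B_i)+2r_i\le 3\dist(x,B_i)$, the last inequality because $r_i\le\dist(x,B_i)$, a fact contained in the proof of Lemma~\ref{lem:balls} (there $2r_i\le\dist(x,x_i)$). Hence, using first that $\varphi$ is a $\Delta_2$-function and then property~(2) of Lemma~\ref{lem:balls},
\[
\varphi(|x-y|)\le\varphi\bigl(3\dist(x,B_i)\bigr)\le C\,\varphi\bigl(\dist(x,B_i)\bigr)\le C K\,r_i,
\]
so that $r_i|B_i|^{-1}\le C\,\varphi(|x-y|)^{1-n}$ for every $y\in B_i$. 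Therefore $r_i|B_i|^{-1}\int_{B_i}|\nabla u|\le C\int_{B_i}\varphi(|x-y|)^{1-n}|\nabla u(y)|\,dy$, and summing over $i$ and using the bounded overlap property~(3), which contributes a factor $N$, we get $\sum_i r_i|B_i|^{-1}\int_{B_i}|\nabla u|\le C N\int_D|\nabla u(y)|\varphi(|x-y|)^{1-n}\,dy$. Combining the two cases with the previous bounds yields the asserted inequality, with a constant depending on $n$, $c_J$, $C'_\varphi$, the $\Delta_2$-constant of $\varphi$, and $D$.

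The step I expect to be the main obstacle is the almost everywhere identity $u_{B_m}\to u(x)$. When $\varphi$ is comparable to $t$ this is just the statement that almost every point is a Lebesgue point, since property~(2) then keeps the balls $B_i$ comparable to balls centred at $x$; in general $\varphi$ may decay faster than linearly, the balls need not be so comparable, and a naive Lebesgue-point argument is unavailable. I would handle this by density: first prove the inequality for $u\in C^\infty(D)\cap L^1_1(D)$, where $u_{B_m}\to u(x)$ at every $x$ because $\sup_{y\in B_m}|x-y|=|x-x_m|+r_m\to 0$ and $u$ is continuous, and then pass to a general $u\in L^1_1(D)$ by approximating in $L^1_1(D)$ by smooth functions, using that averages over fixed balls converge and that the right-hand side converges (a domination argument, harmless once we have reduced to points $x$ at which the potential is finite). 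The remaining ingredients---the $(1,1)$-Poincar\'e inequality on balls and the bookkeeping of the overlap---are routine.
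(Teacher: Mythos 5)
Your overall architecture is the same as the paper's: join $x$ to the center via the chain from Lemma~\ref{lem:balls}, telescope $u_{B_i}$, estimate consecutive averages by the $(1,1)$-Poincar\'e inequality via property~(4), convert $r_i|B_i|^{-1}=c_nr_i^{1-n}$ into $\varphi(|x-y|)^{1-n}$ via property~(2), and sum using the bounded-overlap property~(3). Your conversion step is in fact slightly more direct than the paper's: you observe $r_i\le\dist(x,B_i)$, hence $|x-y|\le 3\dist(x,B_i)$ for $y\in B_i$, and then bound $\varphi(|x-y|)\le C\varphi(\dist(x,B_i))\le CKr_i$ by $\Delta_2$ and property~(2); the paper instead inverts $\varphi$ to show $|x-z|\le 2\varphi^{-1}(Cr_i)$ and concludes $\varphi(\tfrac12|x-z|)\le Cr_i$. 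Both are correct. You also noticed and fixed a small mismatch the paper's proof glosses over, namely that $B_0=B(x_0,\tfrac12\dist(x_0,\partial D))$ differs from the ball $B(x_0,\dist(x_0,\partial D))$ appearing in the statement.

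The one place you deviate, the density detour at the end, is motivated by a concern that does not actually arise, and it is the weakest part of the write-up. You worry that when $\varphi$ decays faster than linearly the chain balls $B_i$ may fail to shrink nicely to $x$, so that $u_{B_i}\to u(x)$ is not automatic. But in the construction of Lemma~\ref{lem:balls} one has $2r_i=\min\{\dist(x_i,\partial D),\,\dist(x_i,x)\}$, and since $x\in D$ is an interior point, $\dist(x_i,\partial D)\to\dist(x,\partial D)>0$ while $\dist(x_i,x)\to 0$; hence for all large $i$ the minimum is attained at $\dist(x_i,x)$, so $2r_i=|x-x_i|$, $B_i\subset B(x,3r_i)$, and $|B_i|\ge 3^{-n}|B(x,3r_i)|$ with constants independent of $x$. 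The family $\{B_i\}$ therefore \emph{does} shrink nicely to $x$, and the Lebesgue differentiation theorem applies directly, exactly as the paper asserts. Your proposed alternative, proving the inequality first for smooth $u$ and then passing to the limit in $L^1_1(D)$, is not only unnecessary but also introduces a new gap: convergence in $L^1_1(D)$ gives $\nabla u_k\to\nabla u$ in $L^1$ but does not by itself give pointwise a.e.\ convergence of $u_k(x)$, so the passage to the limit in $|u_k(x)-(u_k)_B|\le CI_k(x)$ would require an additional subsequence or Mazur-type argument that you do not supply. Replace the density detour by the direct Lebesgue-point argument and the proof is complete.
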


\begin{proof}
If $x\in  B(x_0, \dist(x_0, \partial D))$, then 
\[
\big|u(x) - u_{B(x_0,  \dist(x_0, \partial D))}\big| \leq \frac{\diam (B(x_0,  \dist(x_0, \partial D)))^n}{
n|B(x_0,  \dist(x_0, \partial D))|} \int_{
 B(x_0,  \dist(x_0, \partial D))}  \frac{|\nabla u (y)|}{ |x-y|^{n-1}} \, dy\,
\]
by \cite[Lemma 7.16]{Gilbarg-Trudinger}. 
Since by inequality \eqref{varphi_bdd} there is a constant $C'_\varphi$ such that
$\varphi(  |x-y|)^{n-1}  \le \big(C'_\varphi  |x-y| \big)^{n-1}$,  the claim
follows for points 
$x\in  B(x_0, \dist(x_0, \partial D))$.

Let us then assume that  $x\in D\setminus B(x_0, \dist(x_0, \partial D))$. Let 
$(B_i)_{i=0}^{\infty}$ be a sequence of balls constructed in Lemma~\ref{lem:balls}. Property (2) in Lemma \ref{lem:balls} gives that $\dist(x, B_i) \to 0$ whenever $i \to \infty$, since $\lim_{t\to 0+}\varphi (t)=0$ and $\varphi$ is continuously strictly increasing. Note that $\lim_{t\to 0+}\varphi (t)=0$ follows from the definition of $\varphi$-John domain by considering points near the boundary. Thus, property (2)  and the Lebesgue differentiation theorem \cite[Section 1, Corollary 1]{S} imply that 
\[
u_{B_i} \to u(x) \mbox{ when } i\to\infty
\] 
for almost every $x$. We obtain
\[
\begin{split}
|u(x) - u_{B_0}| &\leq \sum_{i=0}^\infty |u_{B_i} - u_{B_{i+1}}| \\
&\leq \sum_{i=0}^\infty \left (| u_{B_i} - u_{B_i \cap B_{i+1}}|  + |u_{B_{i+1}} - u_{B_i \cap B_{i+1}}| \right)\\
&\leq  \sum_{i=0}^\infty \left (\vint_{B_i \cap B_{i+1}}  |u(y)- u_{B_i} | \, dy  + \vint_{B_i \cap B_{i+1}} |u(y) - u_{B_{i+1}}|\, dy \right)\,.\\
\end{split}
\]
By property (4) in Lemma \ref{lem:balls}
\[
|u(x) - u_{B_0}| 
\leq  2C\sum_{i=0}^\infty \vint_{B_i}  |u(y)- u_{B_i} | \, dy. 
\]
By using the $(1,1)$-Poincar\'e inequality in a ball $B_i$, \cite[Section 7.8]{Gilbarg-Trudinger}, we obtain
\[
\begin{split}
|u(x) - u_{B_0}|
&\leq  C\sum_{i=0}^\infty r_i \vint_{B_i}  |\nabla u(y)| \, dy. \\
\end{split}
\]

By  \eqref{varphi_bdd} we have
$\varphi(2r_i) \le 2 C'_\varphi  r_i$. Since 
$\varphi$ is strictly increasing, there exists the strictly increasing inverse function $\varphi ^{-1}$ such that the inequality
$\varphi^{-1}(2C'_\varphi r_i)\ge 2 r_i$
holds. 
Thus, for each $z \in B_i$ we obtain by property (2) 
in Lemma \ref{lem:balls} that
\[
|x-z| \le \dist(x, B_i) + 2r_i \le \varphi^{-1}(K r_i) + 2r_i
\le  2 \varphi^{-1}\big(C r_i\big)\,,
\]
$C= \max\{K, 2 C'_\varphi\}$.
Hence, we have $ \varphi \big( \frac12 |x-z|\big) \le C r_i$.
By using this estimate and property (3) in Lemma \ref{lem:balls}
we obtain that
\[
\begin{split}
|u(x) - u_{B_0}|
&\leq  C\sum_{i=0}^\infty r_i \vint_{B_i}  |\nabla u(y)| \, dy 
\leq C\sum_{i=0}^\infty \int_{B_i}  \frac{|\nabla u(y)|}{r_i^{n-1}} \, dy \\
&\leq  C\sum_{i=0}^\infty  \int_{B_i}  \frac{|\nabla u(y)|}{\varphi \big( \frac12 |x-y|\big)^{n-1}} \, dy
\leq  C  \int_{D}  \frac{|\nabla u (y)|}{\varphi \big( \frac12 |x-y|\big)^{n-1}} \, dy.
\end{split}
\]
Since the function $\varphi$ safisfy the $\Delta_2$-condition, the claim follows.
\end{proof}

\section{Orlicz embbeding theorems}\label{proof}

Continuous embeddings into Orlicz spaces of exponential type for domains with a cone condition are well known,
\cite[Theorem 1, Theorem 2]{Tru}; we also refer to
\cite{Y}, \cite{P}, \cite{Peetre}. We recall that
Cianchi has proved sharp results for Orlicz-Sobolev spaces whenever relative isoperimetric inequalities are valid in the underlying domain, \cite[Theorem 2 and Example 1]{Cianchi}.
His work covers Orlicz spaces of exponential type and more. 
In particular, classical John domains, that is, 
$\varphi (t)=t$,
satisfy the Trudinger inequality, \cite[Example1]{Cianchi}.

We formulate the new embedding results for $L^1_p$-functions defined on $\varphi$-John domains.

\begin{theorem}\label{thm:main_application}
Let $\varphi : [0,\infty )\to [0, \infty)$ be a continuous, strictly increasing 
$\Delta_2$-
function which satisfies
condition  \eqref{varphi_control}. Let $H$ be an Orlicz function defined as in Theorem~\ref{thm:pointwise-maximal}.
Let $1<p<n$.
If $D$ in $\Rn\,, n\geq 2\,,$ is a $\varphi$-John domain
with a John center $x_0$,
then 
there exists a constant $C<\infty$ such that the inequality
\[
\int_D H(\vert u(x)-u_{B(x_0, \dist (x_0,\partial D)}\vert )\,dx\leq C 
\]
holds whenever $u\in L^1_p(D)$ and $\|\nabla u\|_{L^p(D)}\le 1$;
the constant $C$ does not depend on the function $u$.
\end{theorem}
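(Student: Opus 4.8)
The plan is to obtain the claimed estimate by combining the pointwise integral representation of Theorem~\ref{thm:pointwise-John} with the integral bound for the modified Riesz potential in Theorem~\ref{thm:Riesz-is-bounded}. First I would verify that the hypotheses of Theorem~\ref{thm:pointwise-John} are in force. The function $\varphi$ is continuous, strictly increasing and satisfies the $\Delta_2$-condition by assumption; moreover, since $D$ is a $\varphi$-John domain with some John constant $c_J$, inequality~\eqref{varphi_control} yields $\varphi(t)\le C_\varphi\frac{\varphi(c_J)}{c_J}t=:C'_\varphi t$ for every $t\in[0,c_J]$, which is exactly~\eqref{varphi_bdd}, as recorded in the discussion preceding Lemma~\ref{lem:balls}. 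Also, $D$ is bounded, so for $1<p<n$ we have $\nabla u\in L^p(D)\subset L^1(D)$, hence $u\in L^1_1(D)$ and the average $u_{B(x_0,\dist(x_0,\partial D))}$ is well defined; thus Theorem~\ref{thm:pointwise-John} applies.

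Next, Theorem~\ref{thm:pointwise-John} provides a constant $C_0=C_0(n,\varphi,c_J)$ such that
\[
\big|u(x)-u_{B(x_0,\dist(x_0,\partial D))}\big|\le C_0\int_D\frac{|\nabla u(y)|}{\varphi(|x-y|)^{n-1}}\,dy
\]
for almost every $x\in D$. Applying the increasing function $H$ to both sides and then iterating the $\Delta_2$-inequality~\eqref{H_doubling} about $\lceil\log_2 C_0\rceil$ times to absorb the factor $C_0$, I obtain a constant $C_1$, depending only on $C_0$ and $C^{\Delta_2}_H$, with
\[
H\big(|u(x)-u_{B(x_0,\dist(x_0,\partial D))}|\big)\le C_1\,H\!\left(\int_D\frac{|\nabla u(y)|}{\varphi(|x-y|)^{n-1}}\,dy\right)
\]
for almost every $x\in D$.

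Finally I would integrate this over $D$ and invoke Theorem~\ref{thm:Riesz-is-bounded} with the open set $G=D$, the function $f=|\nabla u|$, and the exponent $1<p<n$; here $\|f\|_{L^p(D)}=\|\nabla u\|_{L^p(D)}\le1$, and the conditions on $\varphi$ and $H$ needed by that theorem are precisely those inherited here from Theorem~\ref{thm:pointwise-maximal}. This gives
\[
\int_D H\big(|u(x)-u_{B(x_0,\dist(x_0,\partial D))}|\big)\,dx\le C_1\int_D H\!\left(\int_D\frac{|\nabla u(y)|}{\varphi(|x-y|)^{n-1}}\,dy\right)dx\le C_1\,C,
\]
which is the assertion, with a constant independent of $u$ (depending only on $n$, $p$, $C_\varphi$, $C_H$, the $\Delta_2$-constant of $H$, and the John constant of $D$).

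The proof is thus essentially an assembly of the two main theorems, and I do not expect a genuine obstacle; the one point requiring care is that the constant $C_0$ produced by the representation formula sits inside the convex, possibly fast-growing function $H$, so one must use the $\Delta_2$-condition to pull it out — and, implicitly, the same property is what makes the maximal-operator step inside Theorem~\ref{thm:Riesz-is-bounded} work. A secondary bookkeeping point is that the case $p=1$ is excluded by the hypothesis $1<p<n$, since then one would instead need the $L\log L$ version of Theorem~\ref{thm:Riesz-is-bounded} together with a correspondingly different normalization of $\nabla u$.
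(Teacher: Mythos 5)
Your proposal is correct and follows essentially the same route as the paper: the authors also prove Theorem~\ref{thm:main_application} by first noting $u\in L^p_{\loc}(D)$, then applying the integral representation of Theorem~\ref{thm:pointwise-John} and feeding the resulting modified Riesz potential into Theorem~\ref{thm:Riesz-is-bounded}. You helpfully make explicit the step of absorbing the constant from Theorem~\ref{thm:pointwise-John} via the $\Delta_2$-condition on $H$, which the paper leaves implicit but which is indeed required.
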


If $p=1$, we need an extra assumption on the Orlicz function $H$.

\begin{theorem}\label{thm:main_application_b}
Let $\varphi : [0,\infty )\to [0, \infty)$ be a continuous, strictly increasing 
$\Delta_2$-
function which satisfies
condition  \eqref{varphi_control}. Let $H$ be an Orlicz function defined as in Theorem~\ref{thm:pointwise-maximal}. Let $D$ in $\Rn$, $n \ge 2$, be  a $\varphi$-John domain. 
If 
\begin{equation}\label{eq:H_summable}
\sum_{j=1}^\infty H(2^{-j})< \infty\,,
\end{equation}
then  there exists a finite constant $C$ such that the inequality
\[
\int_DH(\vert u(x)-u_{B(x_0, \dist(x_0, \partial D))}\vert )\,dx\leq C 
\] 
holds for every $u\in L^1_1(D)$ when 
$\| \nabla u\|_{L^1(D)}\le 1$;
the constant $C$ does not depend on the function $u$.
\end{theorem}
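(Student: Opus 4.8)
The plan is to reduce, via Theorem~\ref{thm:pointwise-John}, to an estimate for the modified Riesz potential of $|\nabla u|$, and then to compensate for the fact that $\|\nabla u\|_{L^1(D)}\le 1$ only controls $|\nabla u|$ in $L^1$ — not in $L\log L$, so Theorem~\ref{thm:Riesz-is-bounded} does not apply directly — by a dyadic truncation argument in the spirit of Maz'ya. Write $B^\ast=B(x_0,\dist(x_0,\partial D))$ and $w=u-u_{B^\ast}$. Since $\varphi$ satisfies \eqref{varphi_control}, it satisfies \eqref{varphi_bdd} on $[0,c_J]$, so Theorem~\ref{thm:pointwise-John} is available and gives $|w(x)|\le C\int_D|\nabla u(y)|\,\varphi(|x-y|)^{1-n}\,dy$ for a.e.\ $x\in D$. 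The target is a bound for $\int_DH(|w(x)|)\,dx$ by a constant that does not depend on $u$.

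First I would truncate $w$ dyadically: for $k\ge 1$ set $w^{(k)}=\bigl(\min(|w|,2^k)-2^{k-1}\bigr)^+\operatorname{sign}w$. Then $|w^{(k)}|\le 2^{k-1}$, $w^{(k)}=2^{k-1}\operatorname{sign}w$ on $\{|w|>2^k\}$, $w^{(k)}\in W^{1,1}(D)\subset L^1_1(D)$, and $|\nabla w^{(k)}|=|\nabla u|\,\chi_{\{2^{k-1}<|w|<2^k\}}=:g_k$ a.e., so that $b_k:=\|g_k\|_{L^1(D)}$ satisfies $\sum_{k\ge 1}b_k\le\|\nabla u\|_{L^1(D)}\le 1$. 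Applying Theorem~\ref{thm:pointwise-John} to $w^{(k)}$ gives $|w^{(k)}(x)-(w^{(k)})_{B^\ast}|\le C\int_D g_k(y)\,\varphi(|x-y|)^{1-n}\,dy$, and since $|(w^{(k)})_{B^\ast}|\le\vint_{B^\ast}|w|\le C_\ast$ with $C_\ast$ coming from the Poincar\'e inequality on the ball $B^\ast$ (hence depending only on $n$, $\dist(x_0,\partial D)$, and on $\|\nabla u\|_{L^1(D)}\le1$), there is an index $k_0$, depending only on $D$ and $n$, such that for $k\ge k_0$ one has $|w^{(k)}|-|(w^{(k)})_{B^\ast}|\ge 2^{k-2}$ on $\{|w|>2^k\}$. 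Consequently
\[
\{|w|>2^k\}\subseteq\Bigl\{x:\ \textstyle\int_D g_k(y)\,\varphi(|x-y|)^{1-n}\,dy\ge c_1 2^k\Bigr\}\qquad(k\ge k_0),
\]
with $c_1$ depending only on the constant of Theorem~\ref{thm:pointwise-John}.

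Next I would turn this into a weak-type bound. Because $b_k\le 1$, Theorem~\ref{thm:pointwise-maximal} with $p=1$ applies to $g_k$ (this is exactly where the hypotheses \eqref{h_sum} and \eqref{sum} enter, with $n(1-\tfrac1p)=0$) and yields $H\bigl(\int_D g_k(y)\,\varphi(|x-y|)^{1-n}\,dy\bigr)\le C\,Mg_k(x)$; alternatively one obtains the same estimate by hand from Lemmas~\ref{lem:pointwise-maximal-1} and~\ref{lem:pointwise-maximal-2} by optimising the splitting radius. Combining this with the inclusion above gives $\{|w|>2^k\}\subseteq\{Mg_k\ge H(c_1 2^k)/C\}$, and the weak $(1,1)$-bound for the Hardy--Littlewood maximal operator yields $|\{|w|>2^k\}|\le C\,b_k/H(c_1 2^k)$ for $k\ge k_0$. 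A dyadic layer-cake decomposition then gives
\[
\int_DH(|w|)\,dx\le H(1)\,|D| + \sum_{0\le k<k_0}H(2^{k+1})\,|D| + C\sum_{k\ge k_0}\frac{H(2^{k+1})}{H(c_1 2^k)}\,b_k .
\]
The first two terms are finite constants independent of $u$; in the last sum the ratio $H(2^{k+1})/H(c_1 2^k)$ is bounded uniformly in $k$, since the two arguments differ by the fixed factor $2/c_1$ and $H$ satisfies the $\Delta_2$-condition, so the sum is $\le C\sum_{k}b_k\le C$.

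The step I expect to be the main obstacle is this last one: every constant produced along the way — $k_0$, $c_1$, the $\Delta_2$-comparison constant, the Poincar\'e constant for $B^\ast$ — must be checked to depend only on $n$, $C_\varphi$, $C_H$, the $\Delta_2$-constant of $H$ and on $D$, never on $u$, and the series must converge unconditionally. The assumption \eqref{eq:H_summable} enters in the customary treatment of the low dyadic range: estimating $\int_{\{|w|\le 1\}}H(|w|)\,dx$ shell by shell produces $\sum_{j\ge1}H(2^{-j})\,|D|$, which \eqref{eq:H_summable} makes finite (in the streamlined version above this contribution is absorbed into the term $H(1)|D|$). If one prefers to bypass Theorem~\ref{thm:pointwise-maximal} and argue directly with $h$ and $\delta$, the same scheme applies, but then the convergence of the resulting series $\sum_k 2^{-k}H(2^{k+1})\,h(\delta_k)\,b_k$ has to be extracted from \eqref{sum} together with \eqref{eq:H_summable}, which is the only genuinely computational point.
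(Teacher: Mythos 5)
Your plan is correct and is essentially the paper's proof: the same Maz'ya-style dyadic truncation (the paper's $v_j$ is your $w^{(k)}$ up to an index shift), followed by the pointwise integral representation of Theorem~\ref{thm:pointwise-John}, then Theorem~\ref{thm:pointwise-maximal} with $p=1$, and a summation over scales controlled by the $\Delta_2$-property of $H$ together with $\sum_k b_k\le\|\nabla u\|_{L^1(D)}\le1$. The only differences are cosmetic: you invoke the weak $(1,1)$ bound for $M$ as a black box where the paper reruns the Besicovitch covering argument inline, and by estimating $\int_{\{|w|\le1\}}H(|w|)\,dx\le H(1)|D|$ you bypass the series $\sum_{j\le j_0}H(2^{j+2})$, so you do not actually invoke hypothesis~\eqref{eq:H_summable} (which is in any case automatic for a convex $H$ with $H(0)=0$, since $H(2^{-j})\le 2^{-j}H(1)$).
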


We state 
the corresponding norm inequalities next.

\begin{corollary}\label{cor:main_application}
If $1<p<n$,
let $\varphi$, $H$, and $D$ be defined as in Theorem~\ref{thm:main_application}. If  $p=1$, let 
$\varphi$, $H$, and $D$ be defined as in Theorem~\ref{thm:main_application_b}.
Then there exists a constant $C<\infty$ such that the inequality
\[
\|u - u_D \|_{L^{H}(D)} \leq C \| \nabla u\|_{L^p(D)}
\]
holds for every $u\in L^1_p(D)$;
the constant $C$ does not depend on the function $u$.
\end{corollary}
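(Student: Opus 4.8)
The plan is to deduce the norm inequality from the modular (integral) estimates of Theorems~\ref{thm:main_application} and \ref{thm:main_application_b} by exploiting homogeneity, and then to trade the integral average over the distinguished ball $B_0:=B(x_0,\dist(x_0,\partial D))$ for the average over $D$. Throughout, recall that $D$ is a domain, hence connected, and that $|D|<\infty$ because $D$ is bounded.

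First I would dispose of the trivial case: if $\nabla u=0$ almost everywhere then $u$ is constant on $D$, so $u-u_D=0$ and there is nothing to prove. Hence assume $0<\|\nabla u\|_{L^p(D)}<\infty$, the finiteness being part of $u\in L^1_p(D)$, and set $v:=u/\|\nabla u\|_{L^p(D)}$, so that $\|\nabla v\|_{L^p(D)}=1$ and $v-v_{B_0}=(u-u_{B_0})/\|\nabla u\|_{L^p(D)}$. Applying Theorem~\ref{thm:main_application} when $1<p<n$, or Theorem~\ref{thm:main_application_b} when $p=1$, to the function $v$ produces a finite constant $C_0$, independent of $u$, with $\int_D H(|v-v_{B_0}|)\,dx\le C_0$. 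Since $H$ is convex with $H(0)=0$, one has $H(\theta t)\le \theta H(t)$ for $\theta\in[0,1]$; choosing $\lambda:=\max\{1,C_0\}$ and $\theta=1/\lambda$ gives $\int_D H\big(|v-v_{B_0}|/\lambda\big)\,dx\le \lambda^{-1}\int_D H(|v-v_{B_0}|)\,dx\le 1$, whence $\|v-v_{B_0}\|_{L^H(D)}\le\lambda$. Rescaling back, $\|u-u_{B_0}\|_{L^H(D)}\le \lambda\,\|\nabla u\|_{L^p(D)}$.

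Next I would pass from $u_{B_0}$ to $u_D$. Since $|D|<\infty$, there is a continuous embedding $L^H(D)\imb L^1(D)$, say with constant $C_D$, so that
\[
|u_D-u_{B_0}|=\Big|\vint_D (u-u_{B_0})\,dx\Big|\le \frac{1}{|D|}\,\|u-u_{B_0}\|_{L^1(D)}\le \frac{C_D}{|D|}\,\|u-u_{B_0}\|_{L^H(D)}.
\]
Combining this with the triangle inequality and with $\|u_{B_0}-u_D\|_{L^H(D)}=|u_{B_0}-u_D|\,\|\chi_D\|_{L^H(D)}$, where $\|\chi_D\|_{L^H(D)}<\infty$ because $|D|<\infty$, yields $\|u-u_D\|_{L^H(D)}\le \big(1+\tfrac{C_D}{|D|}\|\chi_D\|_{L^H(D)}\big)\|u-u_{B_0}\|_{L^H(D)}$, and the previous paragraph then finishes the proof with $C:=\lambda\big(1+\tfrac{C_D}{|D|}\|\chi_D\|_{L^H(D)}\big)$.

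I do not expect a genuine obstacle here: the argument is a routine modular-to-norm passage, so the only point requiring attention is bookkeeping of constants. In particular, unlike the pointwise estimate of Theorem~\ref{thm:pointwise-maximal}, the constant $C$ in the corollary really does depend on the domain $D$ --- through $|D|$, the embedding constant of $L^H(D)\imb L^1(D)$, and $\|\chi_D\|_{L^H(D)}$ --- in addition to $n$, $p$, $C_\varphi$, the data of $H$, and, when $p=1$, the summability hypothesis \eqref{eq:H_summable} inherited from Theorem~\ref{thm:main_application_b}. It is also worth noting that convexity of $H$ alone suffices for the step turning the modular bound into a Luxemburg-norm bound, although the standing $\Delta_2$-assumption on $H$ is in force in any case.
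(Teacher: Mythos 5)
Your proposal is correct and follows essentially the same route as the paper: homogenize to the case $\|\nabla u\|_{L^p(D)}\le 1$, invoke Theorems~\ref{thm:main_application} or \ref{thm:main_application_b} for the modular bound, pass to the Luxemburg norm, and then replace $u_{B_0}$ by $u_D$ via the triangle inequality together with the embedding $L^H(D)\hookrightarrow L^1(D)$. The only difference is cosmetic: you spell out the modular-to-norm step via convexity of $H$ (and keep the $1/|D|$ factor explicit), whereas the paper leaves that conversion implicit and absorbs $1/|D|$ into the constant.
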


The pointwise estimate in Theorem \ref{thm:pointwise-John}
is crucial for the proofs.

\begin{proof}[Proof of the embedding result Theorem~\ref{thm:main_application}.]
Let $u\in L^1_p(D)$. 
Then, by \cite[1.1.2, Theorem]{Mazya} 
$u\in L_{\loc}^p(D)$.
Let $x_0 \in D$ be a John center of $D$. Theorem~\ref{thm:pointwise-John} and 
Theorem \ref{thm:Riesz-is-bounded} imply the claim.
\end{proof}

The proof of the embedding result is more tedious when $p=1$.

\begin{proof}[Proof of the embedding result Theorem~\ref{thm:main_application_b}.]
Let us consider functions $u \in L^1_1(D)$ such that
$\| \nabla u \|_{L^1(D)} \le 1$.  The center ball 
$B(x_0,\dist (x_0,\partial D ))$ is written as $B$.
We show  that there exists a constant $C<\infty$ such that the inequality
\begin{equation}\label{enough}
\int_D H(|u(x)-u_B|) \, dx \le C
\end{equation}
holds whenever $\|\nabla u\|_{L^1(D)} \le 1$.
First we estimate
\[
\int_D H(|u(x)-u_B|) \, dx \le \sum_{j \in \Z} \int_{ \{x\in D: 2^j < |u(x) - u_B| \le 2^{j+1}\}} H(2^{j+1}) \, dx.
\]
Let us define 
\[
v_j(x) = \max\bigg\{0, \min\Big\{|u(x) - u_B| - 2^j, 2^j\Big\}\bigg\}
\]
for all $x\in D$. 
\noindent
If $x \in \{x\in D :2^j < |u(x) - u_B|\le 2^{j+1}\}$, then $v_{j-1}(x) \ge 2^{j-1}$. 
We obtain
\begin{equation}\label{equ:main-2}
\int_D H(|u(x)- u_B|) \, dx \le \sum_{j \in \Z} \int_{\{x\in D :v_j(x) \ge 2^{j}\}} H(2^{j+2}) \, dx.
\end{equation}
By the triangle inequality and Theorem~\ref{thm:pointwise-John} we have
\[
\begin{split}
v_j(x) &= |v_j(x) - (v_j)_B + (v_j)_B| \le |v_j(x) - (v_j)_B| + |(v_j)_B|\\ 
&\le C \int_{D}  \frac{|\nabla v_j(y)|}{\varphi \big( |x-y|\big)^{n-1}} \, dy +  |(v_j)_B|
\end{split}
\]
for almost every $x\in D$.
By the $(1,1)$-Poincar\'e inequality in a ball $B$, \cite[Section 7.8]{Gilbarg-Trudinger}, there exists a constant $C$ such that
\[
|(v_j)_B|= (v_j)_B = \vint_B v_j(x) \, dx \le \vint_B |u(x) - u_B| \, dx  \le C \vint_B |\nabla u(x)| \, dx \le C |B|^{-1}.
\]
Thus, by the definition of $B$ the number $|(v_j)_B|$ is bounded by a constant depending  on $n$ and the distance between the John center and the boundary
of $D$ only.
We write
\[
I_{\varphi}(\nabla v_j)(x) = \int_{D}  \frac{|\nabla v_j(y)|}{\varphi \big( |x-y|\big)^{n-1}} \, dy.
\]
We continue to estimate the right hand side of inequality \eqref{equ:main-2}
\begin{equation}\label{equ:main-3}
\begin{split}
&\int_D H(|u(x)- u_B|) \, dx \le \sum_{j \in \Z} \int_{\{x\in D: C  I_{\varphi}(\nabla v_j)(x)+ C \ge 2^{j}\}} H(2^{j+2}) \, dx\\
&\quad \le \sum_{j \in \Z} \int_{\{x\in D : C  I_{\varphi}(\nabla v_j)(x) \ge 2^{j-1}\}} H(2^{j+2}) \, dx +\sum_{j=-\infty}^{j_0} \int_{D} H(2^{j+2}) \, dx.
\end{split}
\end{equation}
By \eqref{eq:H_summable} we obtain 
\begin{equation}\label{equ:main-4}
\sum_{j=-\infty}^{j_0} \int_{D} H(2^{j+2}) \, dx
= |D| \sum_{j=-\infty}^{j_0}H(2^{j+2}) \le C|D|.
\end{equation}
Then, we will find an upper bound for the sum
\begin{equation*}
\sum_{j \in \Z} \int_{\{x\in D : C  I_{\varphi}(\nabla v_j)(x) \ge 2^{j-1}\}} H(2^{j+2}) \, dx\,. 
\end{equation*}
Since $\|\nabla v_j\|_{L^1(D)} \le \|\nabla u\|_{L^1(D)} \le 1$, Theorem~\ref{thm:pointwise-maximal} implies that
\[
\begin{split}
\sum_{j \in \Z} \int_{\{x\in D: C  I_{\varphi}(\nabla v_j)(x) \ge 2^{j-1}\}} H(2^{j+2}) \, dx 
&\le \sum_{j \in \Z} \int_{\{x\in D: H(C I_{\varphi}(\nabla v_j)(x)) \ge H(2^{j-1})\}} H(2^{j+2}) \, dx \\ 
&\le  \sum_{j \in \Z} \int_{\{x\in D: C  M|\nabla v_j|(x) \ge H(2^{j-1})\}} H(2^{j+2}) \, dx.
\end{split}
\]
We choose for every $x \in \{x\in D: C  M|\nabla v_j|(x) \ge H(2^{j-2})\}$ a ball $B(x, r_x)$,
centered at $x$ and with radius $r_x$ depending on $x$, such that
\[
C \vint_{B(x,r_x)} |\nabla v_j (y)| \, dy \ge \frac12 H(2^{j-1})
\]
with the understanding that $|\nabla v_j|$ is zero outside $D$.
By the Besicovitch covering theorem
(or the $5$-covering theorem) we obtain a subcovering $\{B_k\}_{k=1}^{\infty}$ so that we may estimate by the $\Delta_2$-condition of $H$
\[
\begin{split}
&\sum_{j \in \Z} \int_{\{x\in D: C  I_{\varphi}(\nabla v_j)(x) \ge 2^{j-1}\}} H(2^{j+2}) \, dx
\le \sum_{j \in \Z} \sum_{k=1}^\infty  \int_{B_k}H(2^{j+2}) \, dx \le \sum_{j \in \Z} \sum_{k=1}^\infty  |B_k| H(2^{j+2})\\
&\quad\le \sum_{j \in \Z} \sum_{k=1}^\infty C |B_k|  \frac{H(2^{j+2})}{H(2^{j-1})} \vint_{B_k} |\nabla v_j (y)| \, dy
\le C \sum_{j \in \Z} \int_{D} |\nabla v_j (y)| \, dy.
\end{split}
\]
Let $E_j= \{x\in D :2^j < |u(x) - u_B|\le 2^{j+1}\}$.
Since $|\nabla v_j|$ is zero  almost everywhere in  $D\setminus E_j$ and
$|\nabla u (x)| = \sum_j |\nabla v_j (x)| \chi_{E_j}(x)$ for almost every $x \in D$, we obtain
\begin{equation}\label{equ:main-5}
\sum_{j \in \Z} \int_{\{x\in D: C  I_{\varphi}(\nabla v_j)(x) \ge 2^{j-1}\}} H(2^{j+2}) \, dx
\le C \int_{D} |\nabla u (y)| \, dy \le C.
\end{equation}
Estimates \eqref{equ:main-3}, \eqref{equ:main-4},
and \eqref{equ:main-5} imply inequality
\eqref{enough}.
\end{proof}

\begin{proof}[Proof of Corollary~\ref{cor:main_application}.]
Let us write $B = B(x_0, \dist(x_0, \partial D))$.
Theorem~\ref{thm:main_application}  for $1<p<n$ and   
Theorem~\ref{thm:main_application_b}  for $p=1$  yield 
 \[
\|u - u_B\|_{L^H(D)} \le C
 \]
for every $u\in L^1_p(D)$ with $\|\nabla u\|_{L^p(D)}\le 1$.
By using this inequality for $u/\|\nabla u\|_{L^p(D)}$ we obtain
 \[
\|u - u_B \|_{L^H(D)}  \le C  \| \nabla u\|_{L^p(D)}. 
\]
By the triangle inequality
\[
\|u- u_D\|_{L^H(D)} \le \|u- u_{B}\|_{L^H(D)} + \|u_{B}- u_D\|_{L^H(D)}.
\]
Here,
\[
\begin{split}
\|u_{B}- u_D\|_{L^H(D)} &= |u_{B}- u_D|
\, \|1 \|_{L^H(D)}\le
\|1 \|_{L^H(D)} \|u-u_{B}\|_{L^1(D)}\\
&\le C \|1 \|_{L^H(D)} \|u-u_{B}\|_{L^H(D)}
\end{split}
\]
for some constant $C$. The claim follows.
\end{proof}

\section{Examples}\label{corollary_and_examples}

As an application of  Theorems  \ref{thm:main_application} and \ref{thm:main_application_b} we obtain the following corollary. By Remark~\ref{rem:pointwise-maximal-2} we may replace the assumption $p<n$ by the assumption $p<n/(n-\alpha(n-1))$.

\begin{corollary}\label{main_corollary}
Let $1\le p<n/(n-\alpha(n-1))$ be given.
Let $\alpha \in [1, 1+1/(n-1))$ and $\beta \geq 0$.
Let
$\varphi : (0,\infty )\to \R$
and
$H : [0,\infty )\to \R$ 
be the functions 
\begin{equation*}
\varphi(t) = \frac{t^{\alpha}}{\log ^{\beta }(e+t^{-1})}
\end{equation*}
and
\begin{equation*}
H(t)=\biggl(
\frac{t}{\log ^{\beta (n-1)}(m+t)}
\biggr)^{\frac{np}{\alpha p(n-1)+n(1-p)}}
\end{equation*}
where $m=m(n,p)\ge e$.
If $D$ in $\Rn\,, n\geq 2\,,$ is a $\varphi$-John domain, then
there is a constant $C$ such that the inequality
\[
\int_DH(\vert u(x)-u_{B(x_0, \dist(x_0, \partial D))}\vert )\,dx \leq C
\]
holds for every $u\in L^1_p(D)$ when $\|\nabla u\|_{L^p(D)}\le 1$.
The constant $C$ does not depend on the function $u$.
\end{corollary}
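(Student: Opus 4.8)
The plan is to obtain Corollary~\ref{main_corollary} as a direct application of Theorem~\ref{thm:main_application} (in the range $1<p$) and Theorem~\ref{thm:main_application_b} (when $p=1$): I will check that the explicit pair $\varphi,H$ fulfils all the hypotheses imposed on $\varphi$ and $H$ in those theorems, which amounts to the hypotheses of Theorem~\ref{thm:pointwise-maximal} together, in the case $p=1$, with the summability condition \eqref{eq:H_summable}. For this particular $\varphi$ the relevant exponent range is $p<n/(n-\alpha(n-1))$ rather than $p<n$, by Remark~\ref{rem:pointwise-maximal-2}.

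First I would dispose of the elementary structural properties. The function $\varphi(t)=t^{\alpha}\log^{-\beta}(e+t^{-1})$ is continuous and strictly increasing on $[0,\infty)$ with $\varphi(0)=0$ (a product of the increasing factor $t^{\alpha}$ and the nonincreasing factor $\log^{-\beta}(e+t^{-1})$, with $\alpha\ge1$, $\beta\ge0$); it is a $\Delta_2$-function with constant $2^{\alpha}$ since $\log(e+(2t)^{-1})\ge\log(e+t^{-1})$; and it satisfies \eqref{varphi_control} with $C_\varphi=1$, because $t\mapsto\varphi(t)/t=t^{\alpha-1}\log^{-\beta}(e+t^{-1})$ is nondecreasing. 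An analogous short computation shows that $H$ is an $N$-function satisfying $\Delta_2$ (with constant $2^{q}$, where $q=np/(\alpha p(n-1)+n(1-p))$ and $q>1$ throughout the admissible range, as $\alpha p(n-1)<np\le n(2p-1)$), provided the constant $m=m(n,p)$ is chosen large enough; concretely $m\ge\exp\!\big(\beta(n-1)q/(q-1)\big)$ makes $t\mapsto H(t)/t$ strictly increasing, and the remaining $N$-function axioms are immediate since $q>1$ and the logarithmic factor is slowly varying.

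Next I would produce the auxiliary data $h$ and $\delta$ needed in \eqref{h_sum}--\eqref{sum}. The $k$-th term of the series in \eqref{h_sum} equals $(2^{-k}t)^{n-\alpha(n-1)}\log^{\beta(n-1)}(e+2^{k}t^{-1})$; since $\alpha<1+1/(n-1)$ forces $n-\alpha(n-1)>0$, the geometric decay beats the growth of the logarithm, and using $\log(e+2^{k}t^{-1})\le C\big(k+\log(e+t^{-1})\big)$ one obtains the continuous majorant $h(t)=C\,\varphi(t)^{1-n}t^{n}=C\,t^{n-\alpha(n-1)}\log^{\beta(n-1)}(e+t^{-1})$. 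As in the classical case recorded after Theorem~\ref{thm:pointwise-maximal}, I would take $\delta(t)=t^{-p/n}$. With these choices both summands inside $H$ in \eqref{sum} are comparable to
\[
\varphi(t^{-p/n})^{1-n}\,t^{1-p}=t^{a}\log^{\beta(n-1)}(e+t^{p/n}),
\qquad a:=\frac{\alpha p(n-1)+n(1-p)}{n},
\]
and one notes $aq=p$.

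The main point is then the verification of \eqref{sum}, i.e. $H\big(C\,t^{a}\log^{\beta(n-1)}(e+t^{p/n})\big)\le C_H\,t^{p}$. Substituting into $H(s)=\big(s/\log^{\beta(n-1)}(m+s)\big)^{q}$ produces the factor $t^{aq}=t^{p}$, so what remains is to bound
\[
\frac{\log(e+t^{p/n})}{\log\!\big(m+C\,t^{a}\log^{\beta(n-1)}(e+t^{p/n})\big)}
\]
from above uniformly in $t>0$: for $t\le1$ the numerator is bounded and the denominator is $\ge\log m\ge1$, while for $t\ge1$ the quantity $t^{a}\log^{\beta(n-1)}(e+t^{p/n})$ is at least $t^{a}$, so numerator and denominator are each comparable to $\log t$ for large $t$, and on the intermediate compact range the ratio is bounded by continuity and positivity. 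I expect this logarithm bookkeeping, together with pinning down how large $m$ must be, to be the only genuine obstacle; everything else is routine. Finally, when $p=1$ I would verify \eqref{eq:H_summable}: here $q=n/(\alpha(n-1))>1$ since $\alpha(n-1)<n$, and $H(2^{-j})\le C\,2^{-jq}$ for $j$ large because $\log(m+2^{-j})\to\log m$, whence $\sum_{j}H(2^{-j})<\infty$. With all hypotheses in place, Theorem~\ref{thm:main_application} gives the claim for $1<p<n/(n-\alpha(n-1))$ and Theorem~\ref{thm:main_application_b} gives it for $p=1$.
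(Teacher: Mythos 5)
Your proposal follows exactly the paper's own route: take $h(t)=Ct^{\,n-\alpha(n-1)}\log^{\beta(n-1)}(e+t^{-1})$ from the annular sum, take $\delta(t)=t^{-p/n}$, verify condition \eqref{sum} by the same substitution and logarithm-ratio estimate, note \eqref{H_doubling} and \eqref{eq:H_summable}, and conclude via Theorems~\ref{thm:main_application} and \ref{thm:main_application_b} (with Remark~\ref{rem:pointwise-maximal-2} extending the $p$-range). You add a bit of explicit bookkeeping (how large $m$ must be, that $q>1$ throughout) that the paper dismisses as "clearly", but the argument is the same.
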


Corollary~\ref{main_corollary} recovers the well known case
when $\alpha =1$ and $\beta =0$. But, 
Corollary~\ref{main_corollary} with $\alpha > 1$  and $\beta =0$ is not sharp. Namely, the exponent in the Orlicz function 
$H$
should be $\frac{np}{\alpha(n-1) - p +1}$,  and not ${\frac{np}{\alpha p(n-1)+ n(1 - p)}}$,
according to  \cite[p. 437]{Hajlasz-Koskela-2} and
\cite[Theorem 2.3]{KM}. We propose a conjecture that $\frac{np}{\alpha(n-1) - p +1}$ is the right exponent in the case $\beta >0$ also. We note that our method based on the modified Riesz potential does not give a better exponent than ${\frac{np}{\alpha p(n-1)+ n (1 - p)}}$, see Theorem~\ref{thm:sharpness-Riesz}.

Before the proof we point out that some earlier Orlicz embedding results are recovered.

\begin{example}
If we choose 
\begin{align*}
&\varphi (t)=\frac{t}{\log (e+t^{-1})}\,,\\
&\delta (t) = t^{-p/n}\,,\\
&h(t)= t \log^{n-1}(e+\delta^{-1})\,,\\
&H(t)=\frac{t^{np/(n-p)}}{(\log ^{n-1}(m+t))^{np/(n-p)}}\,, \mbox{ where } m\geq e,
\end{align*}
in Theorem  \ref{thm:main_application},
then
Theorem \ref{thm:main_application} recovers 
\cite[Theorem 1.1]{HH-SK}.
\end{example}

\begin{example}
If we choose 
\begin{align*}
&\varphi (t)=\frac{t^{\alpha}}{\log^{\beta} (e+t^{-1})}\,, 1\le\alpha < 1+1/(n-1))\,,
\\
&\delta (t) = t^{-p/n}\,,\\
&h(t)=t ^{n+(1-n)\alpha}\log^{\beta(n-1)}(e+\delta^{-1})\,,\\
&H(t)=
\biggl(\frac{t}{\log ^{\beta (n-1)}(m+t)}
\biggr)^{np/(\alpha p(n-1)+n-np)}\,,
\end{align*}
 in Theorem \ref{thm:main_application},
then
\cite[Theorem 4.1]{HH-S} follows.
\end{example}

We give a detailed proof Corollary
\ref{main_corollary}, since the proof shows why
the values of $\alpha$ should have
the upper bound $n/(n-1)$.
In Remark \ref{main_alpha_remark} we will point out that
the upper bound $n/(n-1)$ is the best possible with this Hedberg-type method for the modified Riesz potentials.

\begin{proof}[Proof of Corollary \ref{main_corollary}]
Let $\alpha \in [1, 1+1/(n-1))$ and $\beta \geq 0$. When
\begin{align*}
\varphi (t)=\frac{t^{\alpha}}{\log^{\beta} (e+t^{-1})}\,,
\end{align*}
calculations show that the
$\Delta_2$-condition of $\varphi$ and
inequality \eqref{varphi_control}  hold. 

In order to have condition \eqref{h_sum} we substitute  $\varphi $ to the left hand side of
\eqref{h_sum}  and estimate, for $\alpha < \frac{n}{n-1}$,
\[
\begin{split}
\sum_{k=1}^\infty \frac{\left(2^{-k}t \right)^n}{\varphi(2^{-k} t)^{n-1}}
&= t^{n-\alpha(n-1)} \sum_{k=1}^\infty 2^{-k(n-\alpha(n-1)} \log^{\beta(n-1)} \left(e + \frac{2^k}{t} \right)\\
&\le t^{n-\alpha(n-1)} \sum_{k=1}^\infty 2^{-k(n-\alpha(n-1)} \log^{\beta(n-1)} \left(2^k\left(e + \frac{1}{t}\right) \right) \\
&= t^{n-\alpha(n-1)} \sum_{k=1}^\infty 2^{-k(n-\alpha(n-1)} \left ( \log^{\beta(n-1)} \left(2^k \right) +  \log^{\beta(n-1)} \left(e + \frac{1}{t} \right)  \right)\\
&= C_1 t^{n-\alpha(n-1)} + C_2 t^{n-\alpha(n-1)}\log^{\beta(n-1)} \left(e + \frac{1}{t} \right) \\
& \le C t^{n-\alpha(n-1)}\log^{\beta(n-1)} \left(e + \frac{1}{t} \right)\,.
\end{split}
\]
Thus, we may choose
\[
h( t )= C t ^{n+(1-n)\alpha}\log^{\beta(n-1)}(e+t^{-1}).
\]
Let
\[
H(t)=
\biggl(\frac{t}{\log ^{\beta (n-1)}(m+t)}
\biggr)^{np/(\alpha p(n-1)+n-np)}\,.
\]
Conditions  \eqref{H_doubling} and \eqref{eq:H_summable} for the function $H$ hold
clearly.

We choose $\delta: (0, \infty) \to (0, \infty)$, $\delta(t) = t^{-\frac{p}{n}}$, and
show that condition \eqref{sum} holds with $\delta$, $h$, and $H$.
By substituting $h$ and  $\delta$  to the left hand side of \eqref{sum} we obtain
\[
\begin{split}
&H(h(\delta(t))t + \varphi(\delta (t))^{1-n}\delta (t)^{n(1-\frac{1}{p})}\\
&\quad = H\left( C t^{-p+p\alpha-\frac{\alpha p}{n}+1}  \log^{\beta(n-1)}(e+t^{\frac{p}{n}})  + t^{\alpha\frac{p}{n}(n-1)-p+1} \log^{\beta (n-1)}(e+t^{\frac{p}{n}}) \right)\\
&\quad = H\left(2 
C t^{-p+p\alpha-\frac{\alpha p}{n}+1}  \log^{\beta(n-1)}(e+t^{\frac{p}{n}})  
 \right)\\
 &\quad = H\left(2 
C t^{\frac{\alpha p(n-1)+n-np}{n}}  \log^{\beta(n-1)}(e+t^{\frac{p}{n}})  
 \right).
\end{split}
\]
The definition of $H$ and straightforward estimates imply
\[
\begin{split}
&H\left(
h(\delta(t))t + \varphi(\delta (t))^{1-n}\delta (t)^{n(1-\frac{1}{p}}
\right)\\
&\qquad \le  \frac{C t^{p} 
\left(\log^{\beta(n-1)}(e+t^{\frac{p}{n}})\right)^{\frac{pn}{\alpha p(n-1)+n-np}}}
{\left(\log^{\beta (n-1)} \left(m +  2C t^{\frac{\alpha p(n-1)+n-np}{n}}  
\log^{\beta(n-1)}(e+t^{\frac{p}{n}}) \right) \right)^{\frac{pn}{\alpha p(n-1)+n-np}}}\\
&\qquad\le C t^p \left(\frac{\log^{\beta(n-1)}(e+t^{\frac{p}{n}})}{\log^{\beta (n-1)} \left(m +  2C t^{\frac{\alpha p(n-1)+n-np}{n}}\right)} \right)^{\frac{pn}{\alpha p(n-1)+n-np}}\\
&\qquad\le C t^p. 
\end{split}
\]

Thus the claim follows by Theorems~\ref{thm:main_application} and \ref{thm:main_application_b}.
\end{proof}

\begin{remark}\label{main_alpha_remark}
We emphasize that the assumption $\alpha < 1 + \frac1{n-1}$ is natural when we consider the function
\begin{equation*}
\varphi (t)=\frac{t^{\alpha}}{\log^{\beta} (e+t^{-1})}\,.
\end{equation*}
Namely, if we assume that $\alpha \ge 1 + \frac1{n-1}$ and $\beta \ge 0$ and choose $f \equiv 1$ in $D$, then we obtain that
\[
\begin{split}
\int_{D}\frac{|f(y)|}{\varphi( \vert x-y\vert )^{n-1}}\, dy
&\ge \int_{B(x, \min\{1, \dist(x, \partial D)\})}\frac{\log^{\beta(n-1)}( e + |x-y|^{-1})}{\vert x-y\vert^{\alpha (n-1)}}\, dy\\
&\ge \int_{B(x, \min\{1, \dist(x, \partial D)\})}\frac{1}{\vert x-y\vert^{\alpha (n-1)}}\, dy\\
&\ge \int_{B(x, \min\{1, \dist(x, \partial D)\})}\frac{1}{\vert x-y\vert^{n}}\, dy\ = \infty
\end{split}
\]
for every $x\in D$.
\end{remark}

\section{Sharpness of the results}\label{sec:example}

In this section we study sharpness of the norm inequalities
\[
\left \| \int_D \frac{|u(z)|}{\varphi(|\cdot - z|)} \, dz \right \|_{L^H (D)} \le C \|u\|_{L^p(D)}
\]
and $\|u - u_D\|_{L^H(D)} \le C \|\nabla u\|_{L^p(D)}$. We start from the later inequality.

Let $\varphi : [0,\infty )\to [0, \infty)$ be a continuous, strictly increasing $\Delta_2$-function which satisfies
condition  \eqref{varphi_control}.
We give a sufficient condition to the function $H$
in Theorem \ref{ineq_fails}
so that the corresponding inequality in Theorem \ref{thm:main_application} fails. We do it
by constructing a mushrooms-type domain. 
Mushrooms-type domains can be found in \cite{Mazya-Poborchi}, \cite{Mazya}, \cite{HH-SV}, \cite{HH-SK}, \cite{HH-S}.
By using Theorem \ref{ineq_fails} we show that
the embedding in Theorem \ref{thm:main_application_b} is sharp.

Next we construct the mushrooms-type domain. Let $(r_m)$ be a decreasing sequence converging to zero. 
Let $Q_{m}$, $m=1,2,\dots$, be a closed cube in $\Rn$ with side length $2r_{m}$. Let $P_{m}$,
$m=1,2,\dots$,
be a closed rectangle in $\Rn$ which has side length $r_{m}$ for one side and $2\varphi (r_{m})$ for the remaining $n-1$ sides.  Let $Q_{0}=[0,1]^{n}$. We attach $Q_{m}$ and $P_{m}$ together creating 'mushrooms' which we then attach, as pairwise disjoint sets, to one side of  $Q_{0}$.  We have to assume here that $\varphi(r_m) \le r_m$. We attach the mushrooms to the side  that lies in the hyperplane $x_{2}=1$. 
We wish to define a domain that is symmetric with respect to the hyperplane $x_{2}=\frac12$. Thus, let $Q^{*}_{m}$ and $P^{*}_{m}$ be the images of the sets $Q_{m}$ and $P_{m}$, respectively, under the reflection across the hyperplane $x_{2}=\frac12$. We define
\begin{eqnarray}\label{eq:mushroon-domain}
G=\textrm{int}\left(Q_{0}\cup\bigcup_{m=1}^{\infty}\Big(Q_{m}\cup P_{m}\cup Q^{*}_{m}\cup P^{*}_{m}\Big)\right).
\end{eqnarray}

We give a sufficient condition to the Orlicz function $H$ so that the corresponding Orlicz embedding result in Theorem
\ref{thm:main_application} fails.

\begin{theorem}\label{ineq_fails}
Let $p \ge 1$. Let $\varphi : [0,\infty )\to [0, \infty)$ be a continuous, strictly increasing $\Delta_2$-function which satisfies
condition  \eqref{varphi_control}. Suppose that there exists $t_0>0$ such that $\varphi(t) \le t$ for $0<t<t_0$.
Let $G$ in $\Rn$, $n\geq 2$, be a mushrooms-type domain constructed as in \eqref{eq:mushroon-domain}.
If $H$ is an Orlicz function which satisfies the $\Delta_2$-property and the condition
\begin{equation*}
\lim_{t \to 0^+} t^nH\biggl(\biggl(\frac{t^{p-1}}{\varphi (t)^{n-1}}\biggr)^{1/p}\biggr)=\infty, 
\end{equation*}
then there exists a sequence of functions $(u_k)$ in $ L^1_p(G)$ such that  $\|\nabla u_k\|_{L^p(G)}=1$ for every $k$ and 
\begin{equation*}
\int_DH(\vert u_k(x)-(u_k)_D\vert )\,dx \to \infty \quad\text{as} \quad k \to \infty.
\end{equation*}
\end{theorem}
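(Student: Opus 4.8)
The plan is to exhibit explicit test functions supported essentially on a single mushroom and show that the averaging over the thin passage $P_m$ forces the $H$-integral to blow up. For each $m$ let me take $u_k$ to be (a smoothed version of) the function that equals $a_m := \big(\varphi(r_m)^{1-n} r_m^{p-1}\big)^{1/p}$ on the head $Q_m$, vanishes outside $Q_m\cup P_m$, and interpolates linearly across the stem $P_m$ (in the $x_1$-direction, say). Then $\nabla u_k$ is supported in $P_m$, where $|\nabla u_k|\approx a_m/r_m$, and since $|P_m|\approx r_m\varphi(r_m)^{n-1}$ we get $\|\nabla u_k\|_{L^p(G)}^p\approx (a_m/r_m)^p\, r_m\varphi(r_m)^{n-1} = a_m^p r_m^{1-p}\varphi(r_m)^{n-1}$, which is a constant by the choice of $a_m$; after normalizing we may assume $\|\nabla u_k\|_{L^p(G)}=1$ exactly. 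Here I would set $u_k$ to be the function attached to the $k$-th mushroom, so the sequence index is $k=m$.

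Next I would estimate the left-hand side from below. Because $G$ is symmetric across $x_2=\tfrac12$ and the reflected mushrooms carry no mass, a direct computation gives $|(u_k)_G|\le C a_k |Q_k|/|G| \to 0$, so on the head $Q_k$ we have $|u_k(x)-(u_k)_G|\ge a_k/2$ for all large $k$. Therefore
\[
\int_G H\big(|u_k(x)-(u_k)_G|\big)\,dx \ \ge\ \int_{Q_k} H(a_k/2)\,dx \ \ge\ c\,r_k^{\,n}\, H(a_k/2)\ \ge\ c'\, r_k^{\,n}\, H(a_k),
\]
where the last step uses the $\Delta_2$-property of $H$ (comparing $H(a_k)$ with $H(a_k/2)$). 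Since $a_k = \big(r_k^{p-1}/\varphi(r_k)^{n-1}\big)^{1/p}$ and $r_k\to 0$, the hypothesis
\[
\lim_{t\to 0^+} t^n H\!\left(\Big(\tfrac{t^{p-1}}{\varphi(t)^{n-1}}\Big)^{1/p}\right)=\infty
\]
applied along $t=r_k$ shows that $r_k^{\,n} H(a_k)\to\infty$, hence $\int_G H(|u_k-(u_k)_G|)\,dx\to\infty$, which is the claim (with $D=G$).

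Two technical points need care. First, $u_k$ as described is only Lipschitz, not smooth, but it lies in $L^1_p(G)$, which is all that Theorem~\ref{thm:main_application} requires; the corner where $P_k$ meets $Q_k$ and the corner where $P_k$ meets $Q_0$ contribute nothing extra to $\|\nabla u_k\|_{L^p}$ up to the implied constants. One must also check that the assumption $\varphi(r_m)\le r_m$ used in the construction of $G$ is consistent with the blow-up hypothesis; this is exactly why the statement assumes $\varphi(t)\le t$ for small $t$, so we may take the sequence $(r_m)$ small enough that $P_m$ is indeed a thin passage. Second, to get the exact normalization $\|\nabla u_k\|_{L^p(G)}=1$ one rescales $u_k$ by a bounded factor, which changes $a_k$ by a bounded factor and is absorbed by the $\Delta_2$-property of $H$ in the final estimate. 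The main obstacle is the lower bound on $|Q_k|\,H(a_k)$: one must verify that the mushroom head has measure $\gtrsim r_k^n$ (immediate from side length $2r_k$) and that no cancellation occurs in $u_k-(u_k)_G$ on $Q_k$, i.e. that $(u_k)_G$ is negligible compared to $a_k$ — this is where the symmetric reflected copies $Q_m^*,P_m^*$ in the construction of $G$ do their work, keeping $|G|$ bounded below independently of $k$ while $a_k|Q_k|\to 0$.
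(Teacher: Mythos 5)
Your proof is correct and follows essentially the same construction as the paper: test functions concentrated on a single mushroom, normalized by the gradient $L^p$-norm, and the blow-up hypothesis applied with $t = r_k$. The only difference is that the paper defines $u_k$ antisymmetrically, equal to $-F(r_k)$ on the reflected cap $Q_k^*$, so that $(u_k)_G = 0$ exactly, whereas you set $u_k = 0$ there and instead observe that $(u_k)_G$ is negligible relative to $a_k$ (more precisely $|(u_k)_G|/a_k \lesssim |Q_k|/|G| \to 0$ — your inline claim that $(u_k)_G \to 0$ need not hold when $a_k \to \infty$, but the ratio bound is what your argument actually uses and it suffices).
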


\begin{proof}
Let us define a sequence of piecewise linear continuous functions $(u_k)_{k=1}^{\infty}$ by setting
\begin{equation*}
u_{k}(x):=
\begin{cases}
F(r_k)& \textrm{in } Q_{k}, \\
-F(r_k) & \textrm{in } Q^{*}_{k},\\
0 & \textrm{in} Q_0,
\end{cases}
\end{equation*}
where the function $F$ will be  given in \eqref{F}.
Then the integral average of $u_{k}$ over $G$ is zero
for each $k$. 

The gradient of $u_k$ differs from zero in  $P_m \cup P_m^*$ only and 
\[
|\nabla u_k(x)| = \frac{F(r_m)}{r_m}, \textrm{ when }  x\in P_m \cup P_m^* \,.
\] 
Note that
\begin{equation*}
\int_G |\nabla u_k(x)|^p \, dx = 2\int_{P_m}
\biggl(\frac{F(r_m)}{r_m}\biggr)^p=
2r_m\left(\varphi(r_m)\right)^{n-1}\frac{F(r_m)^p}{r_m^p}\,.
\end{equation*}
We require that
\begin{equation*}
\int_G |\nabla u_k(x)|^p \, dx =1\,.
\end{equation*}
Hence,
\begin{equation}\label{F}
F(r_m)=\biggl(\frac{r_m^{p-1}}{2\varphi (r_m)^{n-1}}\biggr)^{1/p}\,.
\end{equation}
Note that
\begin{equation*}
\begin{split}
\int_G H(|u_k(x) - (u_k)_G | )\, dx &= \int_G H(|u_k(x)|) \, dx\\ 
&\ge 2\int_{Q_m}H(F(r_m))=2r_m^nH(F(r_m))\,.
\end{split}
\end{equation*}
Hence,  by  \eqref{F}, the $\Delta_2$-condition, and the assumption we have 
\begin{equation*}
\begin{split}
r^n_mH(F(r_m))=
r_m^nH\biggl(\biggl(\frac{{r_m}^{p-1}}{ 2 \varphi (r_m)^{n-1}}\biggr)^{1/p}\biggr)
&\ge r_m^nH\biggl(\frac12 \biggl(\frac{{r_m}^{p-1}}{  \varphi (r_m)^{n-1}}\biggr)^{1/p}\biggr)\\
&\ge \frac1{C_H^{\Delta_2}}r_m^nH\biggl(\biggl(\frac{{r_m}^{p-1}}{ \varphi (r_m)^{n-1}}\biggr)^{1/p}\biggr)
\to\infty,
\end{split}
\end{equation*}
whenever $m \to \infty$.
\end{proof}

Theorem \ref{ineq_fails} implies that condition \eqref{sum} in Theorem \ref{thm:main_application_b}, in the $p=1$ case, is sharp. 

\begin{remark}
Let
$H$ be an Orlicz $\Delta_2$-function which satisfies condition \eqref{eq:H_summable}.
Let us assume that \eqref{sum} holds with $\delta(t) = t^{-\frac{p}{n}}$ in the case $p=1$ i.e.
\[
H\left(h\Big(t^{-\frac{1}{n}}\Big) t + \varphi \Big(t^{-\frac{1}{n}} \Big)^{1-n}  \right)\le
C_H t \quad \text{for all} \quad t\ge 0.
\]
Then we obtain with every function $h$ that
\[
\begin{split}
\lim_{t \to 0^+} t^n H\left(\frac{1}{\varphi (t)^{n-1}}\right)
& \le \lim_{t \to 0^+} t^n H\left( h(t) t^{-n} + \varphi (t)^{1-n} \right)\\
& \le \lim_{t \to \infty} t^{-1} \,  H\left( h\Big(t^{-1/n}\Big)t  +  \varphi \Big(t^{-\frac{1}{n}} \Big)^{1-n}\right) \le C_H.
\end{split}
\]
\end{remark}

Next we study the modified Riesz potential in $\Rn$. 

\begin{theorem}\label{thm:sharpness-Riesz}
Let $\ve, \delta, \beta \ge 0$ and let
$\alpha \in [1, 1+1/(n-1))$.
Let $1\le p<n/(n-\alpha(n-1))$ be given.
Let
$\varphi : (0,\infty )\to \R$
and
$H : [0,\infty )\to \R$ 
be the functions 
\begin{equation*}
\varphi(t) = \frac{t^{\alpha}}{\log ^{\beta }(e+t^{-1})}
\end{equation*}
and
\begin{equation*}H(t)=\biggl(
\frac{t}{\log ^{\beta (n-1)- \delta}(m+t)}
\biggr)^{\frac{np}{\alpha p(n-1)+n(1-p)} + \ve}\,,
\end{equation*}
where $m \ge e$.
If $\ve >0$ or $\delta >0$, then there exists a sequence of functions $(u_k)$ in $ L^p(\Rn)$ such that $\|u_k\|_{L^p(\Rn)}\le C$ and
\[
\int_{\Rn} H\left( \int_{\Rn} \frac{|u_k(z)|}{\varphi(|x - z|)^{n-1}} \, dz \right)\,dx \to \infty \quad \text{as} \quad k \to \infty.
\]
\end{theorem}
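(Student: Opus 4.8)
The plan is to take each $u_k$ to be a normalized characteristic function of a small ball centred at the origin and to bound the modified Riesz potential from below on a fixed fraction of that ball. Let $\rho_k \to 0^+$ (for instance $\rho_k = 1/k$), put $\omega_n := |B(0,1)|$, and set
\[
u_k = \omega_n^{-1/p}\,\rho_k^{-n/p}\,\chi_{B(0,\rho_k)},
\]
so that $\|u_k\|_{L^p(\Rn)} = 1$ for every $k$. First I would observe that if $x \in B(0,\rho_k/2)$, then $B(x,\rho_k/2) \subset B(0,\rho_k)$, and, since $\varphi$ is increasing, $\varphi(|x-z|)^{n-1} \le \varphi(\rho_k)^{n-1}$ whenever $z \in B(x,\rho_k/2)$. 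Restricting the integral to $B(x,\rho_k/2)$ therefore gives
\[
\int_{\Rn} \frac{|u_k(z)|}{\varphi(|x-z|)^{n-1}}\,dz \ge \omega_n^{-1/p}\rho_k^{-n/p}\,\frac{|B(x,\rho_k/2)|}{\varphi(\rho_k)^{n-1}} = c_n\,\frac{\rho_k^{n(1-1/p)}}{\varphi(\rho_k)^{n-1}}
\]
for every $x \in B(0,\rho_k/2)$, where $c_n = 2^{-n}\omega_n^{1-1/p}$.

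Since $H$ is increasing, integrating this pointwise bound over $B(0,\rho_k/2)$ alone yields
\[
\int_{\Rn} H\!\left(\int_{\Rn}\frac{|u_k(z)|}{\varphi(|x-z|)^{n-1}}\,dz\right)dx \ge 2^{-n}\omega_n\,\rho_k^{n}\,H\!\left(c_n\,\frac{\rho_k^{n(1-1/p)}}{\varphi(\rho_k)^{n-1}}\right),
\]
so it remains to show that the right-hand side tends to infinity as $\rho_k \to 0^+$. To this end I would set $a := \alpha(n-1) - n(1-1/p)$; the hypothesis $p < n/(n-\alpha(n-1))$ rewrites as $n(p-1) < p\alpha(n-1)$, so $a > 0$, and $\alpha p(n-1) + n(1-p) = pa$, whence the exponent occurring in $H$ equals $n/a + \ve$. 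Substituting $\varphi(\rho) = \rho^{\alpha}\log^{-\beta}(e+\rho^{-1})$, the argument of $H$ equals $c_n\,\rho^{-a}\log^{\beta(n-1)}(e+\rho^{-1})$, which tends to $\infty$ as $\rho \to 0^+$; and since $\log\!\big(m + c_n\,\rho^{-a}\log^{\beta(n-1)}(e+\rho^{-1})\big)$ lies between two fixed positive multiples of $\log(e+\rho^{-1})$ for small $\rho$, substituting the explicit form of $H$ and cancelling logarithmic factors produces $c>0$ and $\rho_0 > 0$ with
\[
\rho^{n}\,H\!\left(c_n\,\frac{\rho^{n(1-1/p)}}{\varphi(\rho)^{n-1}}\right) \ge c\,\rho^{n-a(n/a+\ve)}\big(\log(e+\rho^{-1})\big)^{\delta(n/a+\ve)} = c\,\rho^{-a\ve}\big(\log(e+\rho^{-1})\big)^{\delta(n/a+\ve)}
\]
for all $0 < \rho < \rho_0$. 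If $\ve > 0$, then $\rho^{-a\ve} \to \infty$ and the logarithmic factor is $\ge 1$; if $\ve = 0$ and $\delta > 0$, then $\big(\log(e+\rho^{-1})\big)^{\delta n/a} \to \infty$. In either case the quantity diverges along $\rho = \rho_k$, which is the conclusion, with $C = 1$.

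I expect the only real work to be in the last step: tracking the exponents carefully so that the powers of $\rho$ cancel and only $\rho^{-a\ve}$ times a non-negative power of $\log(e+\rho^{-1})$ survives, and verifying the stated comparability of $\log(m + \text{argument})$ with $\log(e+\rho^{-1})$, for which one treats the cases $\beta(n-1)-\delta \ge 0$ and $\beta(n-1)-\delta < 0$ separately and uses the $\Delta_2$-property of $H$ to absorb the constant $c_n$. The normalization of $u_k$, the ball-inclusion estimate, and the reduction to a one-variable asymptotic are routine. One may also read this computation as saying that the relevant part of condition~\eqref{sum}, taken with $\delta(t) = t^{-p/n}$, fails in the limit for this $H$, which is exactly why the sharpness is tight against Corollary~\ref{main_corollary}.
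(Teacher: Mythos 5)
Your proof is correct and takes essentially the same route as the paper's: the test functions are rescaled characteristic functions of shrinking balls, and one tracks the leading asymptotics of $\rho^n H\big(c\,\rho^{-a}\log^{\beta(n-1)}(e+\rho^{-1})\big)$ with $a=\alpha(n-1)-n(1-1/p)>0$. The paper arrives at the same one-variable quantity by rescaling $\chi_{B(0,2)}$, changing variables, and invoking Fatou's lemma, whereas you obtain it directly by a lower bound for the potential on $B(0,\rho_k/2)$; the remaining exponent bookkeeping and the comparability of $\log(m+\text{argument})$ with $\log(e+\rho^{-1})$ are identical.
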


\begin{proof}
Let  $A>0$. We will fix $f \in L^p(\Rn)$ later.
By changing the variables we obtain
\begin{align*}
\|A^\frac{n}{p} f(Ax)\|_{L^p(G)}
& = \left(\int_{\Rn} A^n |f(Ax)|^p\,dx \right)^{\frac1p}
=\left(\int_{\Rn}|f(y)|^p\,dy\right)^{\frac1p}\\
&= \|f\|_{L^p(\Rn)}\,.
\end{align*}
On the other hand, by changing the variables,
$Ax=z$ and $Ay=\omega$,
we obtain
\begin{align*}
&\int_{\Rn}  H\biggl(\int_{\Rn} \frac{|A^\frac{n}{p} f(Ax)|}{\varphi (|x-y|)^{n-1}}\,dx\biggr)\,dy\\
&=\int_{\Rn}  H\biggl(\int_{\Rn}  \frac{A^\frac{n}{p} |f(z)|}{A^n\varphi (|\frac{z}{A}-y|)^{n-1}}\,dz\biggr)\,dy\\
&=\int_{\Rn}  A^{-n}H\biggl(A^{\frac{n}{p}-n} \int_{\Rn}  \frac{|f(z)|}{\varphi (|\frac{z}{A}-\frac{\omega}{A}|)^{n-1}}\,dz\biggr)\,d\omega\\
&=\int_{\Rn}  A^{-n}H\biggl(A^{\frac{n}{p}-n} \int_{\Rn}  \frac{|f(z)|}{\varphi ( A^{-1}|z-\omega|)^{n-1}}\,dz\biggr)\,d\omega\,.\\
\end{align*}
Thus, by Fatou's lemma
\[
\begin{split}
&\lim_{A \to \infty}\int_{\Rn}  H\biggl(\int_{\Rn} \frac{|A^\frac{n}{p} f(Ax)|}{\varphi (|x-y|)^{n-1}}\,dx\biggr)\,dy\\ 
&\qquad\ge
\int_{\Rn}  \lim_{A \to \infty} A^{-n}H\biggl(A^{\frac{n}{p}-n} \int_{\Rn}  \frac{|f(z)|}{\varphi ( A^{-1}|z-\omega|)^{n-1}}\,dz\biggr)\,d\omega\,.
\end{split}
\]
Let $f(x)=1$ when $x\in B(0, 2)$ and let $f(x)=0$ otherwise. 
Hence, by substituting
\[
\varphi(t) = \frac{t^{\alpha}}{\log ^{\beta }(e+t^{-1})}
\]
we obtain
that for every $\omega \in B(0,1)$ 
\[
\begin{split}
&A^{-n}H\biggl(A^{\frac{n}{p}-n} \int_{\Rn}  \frac{|f(z)|}{\varphi ( A^{-1}|z-\omega|)^{n-1}}\,dz\biggr) \\
&\quad = A^{-n}H\biggl(A^{\frac{n}{p}-n} \int_{\Rn}  \frac{|f(z)| \log^{\beta(n-1)}(e+A/|z-\omega|)}{A^{-\alpha(n-1)} |z-\omega|^{\alpha(n-1)}}\,dz\biggr) \\
&\quad \ge A^{-n}H\biggl(A^{\frac{n}{p}-n + \alpha(n-1)} \int_{B(\omega,\frac12)}  \frac{\log^{\beta(n-1)}(e+A)}{ |z-\omega|^{\alpha(n-1)}}\,dz \biggr). 
\end{split}
\]
Since $\int_{B(\omega,\frac12)}  \frac{1}{|z-\omega|^{\alpha(n-1)}}\,dz \ge C >0$ for every $\omega \in B(0,1)$ and $H$ is an increasing function, we may estimate
\[
\begin{split}
&\lim_{A \to \infty} A^{-n}H\biggl(A^{\frac{n}{p}-n} \int_{\Rn}  \frac{|f(z)|}{\varphi ( A^{-1}|z-\omega|)^{n-1}}\,dz\biggr)\\
&\quad \ge \lim_{A \to \infty}A^{-n}H\biggl(C A^{\frac{n}{p}-n+ \alpha(n-1)}  \log^{\beta(n-1)}(e+A) \biggr)\,.
\end{split}
\]
By substituting $H$,
\[
H(t)=\biggl(
\frac{t}{\log ^{\beta (n-1)- \delta}(m+t)}
\biggr)^{\frac{np}{\alpha p(n-1)+n(1-p)} + \ve}\,,
\]
we obtain
\[
\begin{split}
&\lim_{A \to \infty} A^{-n}H\biggl(A^{\frac{n}{p}-n} \int_{\Rn}  \frac{|f(z)|}{\varphi ( A^{-1}|z-\omega|)^{n-1}}\,dz\biggr)\\
&=\lim_{A \to \infty}A^{-n} \left( \frac{C A^{\frac{n}{p}-n+ \alpha(n-1)}  \log^{\beta(n-1)}(e+A)}{\log^
{\beta(n-1)- \delta}\left(C A^{\frac{n}{p}-n+ \alpha(n-1)}  \log^{\beta(n-1)}(e+A)\right)} \right)^{\frac{np}{\alpha p(n-1)+n(1-p)} + \ve}\\
&=\lim_{A \to \infty}C A^{\ve(\frac{n}{p}-n+ \alpha(n-1))} \left( \frac{\log^{\beta(n-1)}(e+A)}{\log^
{\beta(n-1)-\delta}\left(C A^{\frac{n}{p}-n+ \alpha(n-1)}  \log^{\beta(n-1)}(e+A)\right)} \right)^{\frac{np}{\alpha p(n-1)+n(1-p)} + \ve}\,.
\end{split}
\]
Note that  $1 \le p <n/(n-\alpha(n-1))$ implies that $\frac{n}{p}-n+ \alpha(n-1)>0$.
If $\ve >0$, then $A^{\ve(\frac{n}{p}-n+ \alpha(n-1))} \to \infty$ as $A \to \infty$, and thus the last limit is infinite for every $\omega \in B(0,1)$. 
If $\delta >0$ (and $\beta \ge 0$), then the term in the brackets tends to infinity as $A\to \infty$,  and thus the last limit is infinite for every $\omega \in B(0,1)$.
Hence, in both cases we obtain
\[
\lim_{A \to \infty}\int_{\Rn}  H\biggl(\int_{\Rn} \frac{|A^\frac{n}{p} f(Ax)|}{\varphi (|x-y|)^{n-1}}\,dx\biggr)\,dy = \infty
\]
and the claim follows.
\end{proof}

\begin{remark}\label{rem:sharpness-Riesz}
By Theorem~\ref{thm:sharpness-Riesz} the exponents $\frac{np}{\alpha p(n-1)+n(1-p)}$ and $\beta(n-1)$ in Corollary~\ref{main_corollary} are the best possible in the sense that our method based on the use of the modified Riesz potential cannot give a better exponent.
\end{remark}


\bibliographystyle{amsalpha}

\end{document}